 \newtheorem{thm}{Theorem}[section]
 \newtheorem*{thm*}{Theorem}
 \newtheorem{cor}[thm]{Corollary}
 \newtheorem{lem}[thm]{Lemma}
 \newtheorem{prop}[thm]{Proposition}
 \theoremstyle{definition}
 \theoremstyle{remark}
 \newtheorem{quest}{Question}
 \numberwithin{equation}{section}
\newcommand{\tr}{\operatorname{Tr}}
\newcommand{\im}{\operatorname{Im}}
\begin{document}
\title{A Wiener algebra for Fock space operators}
\author{Robert Fulsche}
\date{\today}

\maketitle
\begin{abstract}
    We introduce an algebra $\mathcal W_t$ of linear operators that act continuously on each of the Fock spaces $F_t^p$, $1 \leq p \leq \infty$, and contains all Toeplitz operators with bounded symbols. We show that compactness, the spectrum, essential spectrum and the Fredholm index of an element of $\mathcal W_t$, realized as an operator on $F_t^p$, are independent of the value of $p$.
\end{abstract}

\section{Introduction}

Spectral theory for operators on Fock spaces $F_t^p$ (also known as Segal-Bargmann spaces) has received steady interest over the past years. Among the results achieved are statements about compactness \cite{Bauer_Fulsche2019, Bauer_Isralowitz2012, Isralowitz_Mitkovski_Wick, Tien_Khoi2019} and Fredholm properties \cite{Al-Qabani_Virtanen, Fulsche_Hagger, Fulsche2022}. Toeplitz operators are certainly among the most prominent operators studied on these spaces, and we will restrict our introductory discussion to them. 

The first thing one can observe is that a sum of products of Toeplitz operators, which we simply shall denote by $A$, yields a bounded linear operator on the Fock space $F_t^p$ for every $p \in [1, \infty]$. It was proven in \cite{Englis1999} (for $p = 2$) and later in \cite{Bauer_Isralowitz2012} (for $p \in (1, \infty)$) that such a finite sum of products of Toeplitz operators on $F_t^p$ is compact if and only if the Berezin transform of the operator vanishes at infinity. Since the Berezin transform of $A$ is given by $\widetilde{A}(z) = \langle A k_z^t, k_z^t\rangle$, where $k_z^t$ is the normalized reproducing kernel of $F_t^2$ and the inner product is the dual pairing induced from $F_t^2$, one sees that the property $\widetilde{A} \in C_0$ is independent of the particular Fock space $F_t^p$ on which we realize the operator $A$, i.e.\ compactness is independent of the value of $p$.

In \cite{Al-Qabani_Virtanen} it is proven that the essential spectrum of a Toeplitz operator $T_f$ with symbol $f \in \operatorname{VO}(\mathbb C^n)$ (i.e.\ of vanishing oscillation) is given by $\sigma_{ess}(T_f) = \bigcap_{r > 0} (f(B(0,r)^c))$, the values of $f$ at infinity, and this is true on $F_t^p$ for all $p \in (1, \infty)$. Together with the above-mentioned compactness result, these are about all explicit results that are known regarding the spectrum of a Toeplitz operator on a Fock space. A common fact about both results is that the spectral data obtained is independent of the value of $p$. In this work, we will investigate this phenomenon in detail.

To be more precise, we will introduce a class of linear operators $\mathcal W_t$ that acts on each of the Fock $F_t^p$ (with $t > 0$ fixed and $p$ varying) continuously. The class will be defined by certain off-diagonal properties of the operator's integral kernels, details will be given in the main part of the paper. Since $\mathcal W_t$ is an algebra containing all Toeplitz operators with bounded symbols, there is a large class of non-trivial operators in $\mathcal W_t$. Our main results can briefly be summarized as follows:
\begin{thm*}
    Let $A \in \mathcal W_t$. \begin{enumerate}
        \item $A$ acts as a bounded operator on each of the Fock spaces $F_t^p$, $p \in [1, \infty]$. Further, $A$ is contained in the Banach algebra generated by all Toeplitz operators with bounded symbols.
        \item Compactness of $A$ is independent of the value of $p \in [1, \infty]$.
        \item $\sigma(A)$ and $\sigma_{ess}(A)$ are both independent of $p \in [1, \infty]$.
        \item If $A$ is Fredholm, then $\operatorname{ind}(A)$ is independent of $p$.
    \end{enumerate}
\end{thm*}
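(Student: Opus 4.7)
The plan is to anchor everything on two pillars: a Wiener-type inverse-closedness theorem for $\mathcal{W}_t$, and the Bauer--Isralowitz characterization of compactness in the closed Toeplitz algebra via the Berezin transform. For part (1), I would leverage the off-diagonal decay built into the definition of $\mathcal{W}_t$ to run a Schur test against the Gaussian weights of the Fock spaces, giving a uniform bound on $\|A\|_{F_t^p \to F_t^p}$ in terms of the Wiener-algebra norm of $A$ for every $p \in [1,\infty]$. Containment in the closed Toeplitz algebra I would establish by approximating the integral kernel of $A$ by truncated, mollified versions of itself; after truncation these kernels are realized as finite integrals of rank-one operators of the form $k_w^t \otimes k_z^t$, and each such rank-one operator lies in the norm-closure of the Toeplitz algebra (e.g.\ as a weak-$*$ limit of $T_f$ for symbols $f$ concentrated near $z, w$). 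The uniform kernel decay lets the error in this approximation be controlled simultaneously in every $F_t^p$.

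Given (1), part (2) follows quickly: on $F_t^2$ the Berezin transform $\widetilde{A}(z) = \langle A k_z^t, k_z^t\rangle$ depends only on $A$, and $k_z^t \rightharpoonup 0$, so compactness on any single $F_t^p$ forces $\widetilde{A} \in C_0$; conversely, since $A$ lies in the closed Toeplitz algebra by (1), the compactness criterion of \cite{Bauer_Isralowitz2012} upgrades $\widetilde{A} \in C_0$ to compactness on $F_t^p$ for every $p$, because the approximating Toeplitz operators can be chosen with compactly supported symbols, which are simultaneously compact on all $F_t^p$.

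The heart of the argument, and the step I expect to be the main obstacle, is the inverse-closedness: if $A \in \mathcal{W}_t$ is invertible on some $F_t^p$, then $A^{-1} \in \mathcal{W}_t$. The natural route is to encode operators in $\mathcal{W}_t$ as matrices over a sufficiently fine lattice $\Lambda \subset \mathbb{C}^n$, with block entries controlled by a sequence in a Wiener-type convolution-dominated space, and to invoke a Gohberg--Baskakov--Sj\"ostrand style theorem: decay of matrix entries is preserved under inversion in such algebras. The delicate point is that Fock-space kernels do not diagonalize against a lattice as neatly as in the discrete setting, so the quantization map between $\mathcal{W}_t$ and the matrix algebra must respect both the off-diagonal decay and the Banach-algebra structure. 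Once inverse-closedness is in hand, part (3) is immediate: $\sigma_{F_t^p}(A) = \sigma_{\mathcal{W}_t}(A)$ is $p$-independent, and since by (2) the compact ideal inside $\mathcal{W}_t$ is $p$-independent, $\sigma_{ess}$ is as well.

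For part (4), I would show that $\ker(A|_{F_t^p})$ and its cokernel are both $p$-independent. Off-diagonal decay of the integral kernel, combined with the reproducing formula of $F_t^p$, forces any $L^p$-solution of $Af = 0$ to be smooth and in fact to belong to every $F_t^q$ simultaneously; the same applies to the adjoint with respect to the $F_t^2$ dual pairing, handling the cokernel. Alternatively, using the parametrix $B \in \mathcal{W}_t$ supplied by inverse-closedness modulo compacts, one can express $\operatorname{ind}(A) = \tr(I - BA) - \tr(I - AB)$ inside $\mathcal{W}_t$, a trace computation that does not reference $p$. Either route gives the final claim.
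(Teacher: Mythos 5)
Your overall architecture (Schur test for boundedness, Berezin-transform criterion for compactness, inverse-closedness as the crux of spectral invariance) matches the paper's, and your identification of a Sj\"ostrand/Gr\"ochenig-type Wiener property as the main obstacle is exactly right --- the paper resolves it by transporting $\mathcal W_t$ through the Bargmann transform onto $\operatorname{op}^w(M^{\infty,1}(\mathbb R^{2n}))$ and citing the known spectral invariance of that algebra on all modulation spaces $M^p$, rather than re-running the lattice-discretization argument you sketch. But several of your steps have genuine gaps. First, the rank-one decomposition you propose for membership in the closed Toeplitz algebra cannot converge in operator norm for a general $A \in \mathcal W_t$: the identity lies in $\mathcal W_t$ (since $|K_w^t(z)| \leq e^{\frac{|z|^2+|w|^2}{2t}} e^{-\frac{|z-w|^2}{2t}}$), and a norm-convergent integral of rank-one operators would force compactness. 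The paper instead observes that $\mathcal W_t$-operators are weakly localized and invokes Hagger's theorem that the Banach algebra generated by the weakly localized operators coincides with $\mathcal C_1(\mathbb F_t)$, which in turn is the closed Toeplitz algebra.

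Second, $p$-independence of $\sigma_{ess}$ does \emph{not} follow from $p$-independence of $\sigma$ together with $p$-independence of compactness: you must be able to test Fredholmness of $A - \lambda$ inside $\mathcal W_t$, i.e.\ produce a parametrix lying in $\mathcal W_t$ modulo compacts, and you assume such a parametrix in two places without constructing it (inverse-closedness in $\mathcal L(\mathbb F_t)$ does not automatically give inverse-closedness in the Calkin algebra). The paper circumvents this via limit operators: $A \in \mathcal C_1(\mathbb F_t)$ is Fredholm iff every limit operator $\alpha_x(A)$, $x \in \partial\mathbb C^n$, is invertible; limit operators of $\mathcal W_t$-elements remain in $\mathcal W_t$ because the bound $H(w-z)$ is translation-invariant and survives the limit; and invertibility of $\mathcal W_t$-elements is $p$-independent by inverse-closedness. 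Third, neither of your routes to part (4) works as stated. The regularity route fails because the kernel bound only yields $|Af(z)|e^{-\frac{|z|^2}{2t}} \lesssim \bigl(|f|e^{-\frac{|\cdot|^2}{2t}}\bigr) \ast H(z)$ with $H \in L^1$ merely, and convolution with an $L^1$ function does not improve $L^p$-integrability, so kernel elements on $F_t^p$ need not lie in the smaller Fock spaces. The trace route requires a parametrix modulo trace class and a well-defined, $p$-independent trace on nuclear operators of $F_t^p$; the paper explicitly lists the $p$-independence of nuclearity and the formula $\tr(A_k) = \int k(z,z)\,d\mu_t(z)$ as open questions, so this cannot be used. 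The paper's actual proof of (4) is by complex interpolation: $[F_t^1, f_t^\infty]_{[\theta]} = F_t^p$, and Albrecht's theorem makes $\operatorname{ind}(A:\overline X_{[\theta]} \to \overline X_{[\theta]})$ a continuous integer-valued function on the (by Corollary \ref{cor:FredholmnessIndependent} full) set of $\theta$ where $A$ is Fredholm, hence constant; the case $F_t^\infty$ follows by biduality from $f_t^\infty$.
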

While it seems that the algebra $\mathcal W_t$ has so far never been considered in the literature on Fock space operator theory, we want to emphasize that we actually do not give any new deep proofs in our work. Our findings are essentially based on several deep results that have been obtained in the last years. Our contribution here is that we provide the right definitions and draw connections between several established theorems. Nevertheless, we think that the theme of spectral independence on Fock spaces is an interesting field which so far has not been properly studied. The present work should be seen as a starting point, connection known results under the present point of view and hopefully motivating more research in this direction.

\section{Preliminaries}
On $\mathbb C^n$ we consider the family of probability measures $\mu_t$ given by
\begin{align*}
d\mu_t(z) = \frac{1}{(\pi t)^n} e^{-\frac{|z|^2}{t}}~dz,
\end{align*}
where $| \cdot |$ is the Euclidean norm, $dz$ the standard Lebesgue measure and $t > 0$ a fixed real number. Given any $1 \leq p < \infty$, we define the Fock space $F_t^p$ by
\begin{align*}
F_t^p = L_t^p \cap \operatorname{Hol}(\mathbb C^n),
\end{align*}
where $\operatorname{Hol}(\mathbb C^n)$ denotes the entire functions on $\mathbb C^n$ and $L_t^p := L^p(\mathbb C^n, \mu_{2t/p})$. This space is always endowed with its natural $L^p$-norm, i.e.
\begin{align*}
\| f\|_p^p = \left( \frac{p}{2\pi t} \right)^n \int_{\mathbb C^n} |f(z)|^p e^{-\frac{p}{2t}|z|^2}~dz.
\end{align*}
These spaces are well-known to be closed subspaces of $L^p(\mathbb C^n, \mu_{2t/p})$.

For $p = \infty$, two different Fock spaces come into play: $F_t^\infty$ consists of all entire functions $f$ on $\mathbb C^n$ such that
\begin{align*}
\| f\|_{F_t^\infty} := \sup_{z \in \mathbb C^n} |f(z)|e^{-\frac{|z|^2}{2t}} < \infty.
\end{align*}
Further, we also consider
\begin{align*}
f_t^\infty := \{ f \in F_t^\infty: fe^{-\frac{|\cdot|^2}{2t}} \in C_0(\mathbb C^n)\}.
\end{align*}
Both spaces are well-known to be complete, $f_t^\infty$ being a closed subspace of $F_t^\infty$. The standard references for this family of Fock spaces are \cite{Janson_Peetre_Rochberg1987, Zhu2012}.

For $p = 2$, we clearly are in the Hilbert space setting. As is well-known, $F_t^2$ is even a reproducing kernel Hilbert space, the reproducing kernels being given by
\begin{align*}
K_z^t(w) = e^{\frac{w \cdot \overline{z}}{t}}.
\end{align*}
Indeed, the functions $K_z$ are contained in any of the spaces $F_t^p$ and $f_t^\infty$, and they span a dense subspace of $F_t^p$ for $1 \leq p < \infty$ and of $f_t^\infty$. The normalized reproducing kernels are now defined as
\begin{align*}
k_z^t(w) = \frac{K_z^t(w)}{\| K_z^t\|_{F_t^2}} = e^{\frac{w \cdot \overline{z}}{t} - \frac{|z|^2}{t}}.
\end{align*}
It follows from elementary computations that we indeed have $\| k_z^t\|_{F_t^p} = 1$ for any $p$.

Under the $F_t^2$ inner product, the Fock spaces satisfy the following duality relations:
\begin{itemize}
\item For $1 \leq p < \infty$: $(F_t^p)' \cong F_t^q$, where $\frac{1}{p} + \frac{1}{q} = 1$;
\item $(f_t^\infty) \cong F_t^1$;
\item $(F_t^\infty)'$ strictly contains $F_t^1$.
\end{itemize}
Besides the duality structure of the Fock spaces, we will also make use of their behaviour under the complex interpolation method. As the standard textbook reference for interpolation methods we refer to \cite{Bergh_Lofstrom1976}. A couple of Banach spaces $\overline{X} = (X_0, X_1)$ is said to be compatible if there exists a Hausdorff topological vector space $X$ such that both $X_0$ and $X_1$ are subspaces of $X$ and the embeddings $X_j \hookrightarrow X$ are continuous. For such a compatible couple, we let $\Delta(\overline{X}) = X_0 \cap X_1$ and $\Sigma(\overline{X}) = X_0 + X_1$. We denote the interpolation space obtained by the complex interpolation method with parameter $\theta \in [0, 1]$ by $\overline{X}_{[\theta]}$. Further, we say that $A \in \mathcal L(\overline{X})$ if $A: X_0 + X_1 \to X_0 + X_1$ and $A|_{X_0}: X_0 \to X_0$, $A|_{X_1}: X_1 \to X_1$ continuously. Then, $A: (X_0, X_1)_{[\theta]} \to (X_0, X_1)_{[\theta]}$ continuously for every $\theta \in [0, 1]$.

The couple $\overline{X} = (F_t^1, f_t^\infty)$ is a compatible couple with ambient space $X = f_t^\infty$. Then, $\Delta(\overline{X}) = F_t^1$ and $\Sigma(\overline{X}) = f_t^\infty$ by the usual embedding properties of Fock spaces.

Zhu proves in \cite[Theorem 2.29]{Zhu2012} that $[F_t^1, f_t^\infty]_{[\theta]} = F_t^p$ with $p = \frac{1}{1-\theta}$ for every $\theta \in (0, 1)$ (more precisely, he proves $[F_t^1, F_t^\infty]_{[\theta]} = F_t^p$; an application of \cite[Theorem 4.2.2(b)]{Bergh_Lofstrom1976} then yields the statement). Further, since $\Delta(\overline{X}) = F_t^1$ is dense in $F_t^1$ and $f_t^\infty$, \cite[Theorem 4.2.2 (a) and (c)]{Bergh_Lofstrom1976} show that $[F_t^1, f_t^\infty]_{[0]} = F_t^1$, $[F_t^1, f_t^\infty]_{[1]} = f_t^\infty$. Similarly, $[L_t^1, L_t^\infty]_{[\theta]} = L_t^p$ for $p = \frac{1}{1-\theta}$ and $\theta \in (0, 1)$.

We will consider $\mathcal L(X)$, the bounded linear operators on $X$, for $X$ any of the above Fock spaces. For $p = 2$, we of course have the well-known orthogonal projection $P_t \in \mathcal L(L^2(\mathbb C^n, \mu_t))$ mapping onto $F_t^2$ by
\begin{align*}
P_t f(z) = \langle f, K_z^t\rangle = \frac{1}{(\pi t)^n} \int_{\mathbb C^n} f(w) e^{\frac{z \cdot \overline{w}}{t}} e^{-\frac{|w|^2}{t}}~dw.
\end{align*}
As is well-known, $P_t$ (interpreted as an integral operator) gives rise to a bounded projection on $L^p(\mathbb C^n, \mu_{2t/p})$ mapping onto $F_t^p$ for any $1 \leq p \leq \infty$. Hence, for any $f \in L^\infty(\mathbb C^n)$ and $1 \leq p \leq \infty$ the Toeplitz operator $T_f^t$ given by
\begin{align*}
T_f^t: F_t^p \to F_t^p, \quad T_f^t(g) = P_t (fg)
\end{align*}
is well defined and bounded, satisfying $\| T_f^t\|_{F_t^p \to F_t^p} \leq C_{n, p, t} \| f\|_\infty$. Further, every Toeplitz operator leaves $f_t^\infty$ invariant. Therefore, we can also consider $T_f^t \in \mathcal L(f_t^\infty)$.

For $z \in \mathbb C^n$ we define the Weyl operator $W_z^t$ by
\begin{align*}
W_z^tg(w) = k_z^t(w) g(w-z). 
\end{align*}
Indeed, $W_z^t = T_{g_z}^t$ with
\begin{align*}
g_z^t(w) = e^{\frac{|z|^2}{2t} + \frac{2i \operatorname{Im}(w \cdot \overline{z})}{t}},
\end{align*}
in particular $W_z^t \in \mathcal L(F_t^p)$ for every $1 \leq p \leq \infty$ and $W_z^t \in \mathcal L(f_t^\infty)$.
These operators satisfy the following well-known properties, which we fix as a lemma:
\begin{lem}\label{lem:prop_weyl}
\begin{enumerate}[(1)]
\item $W_z^t$ is an isometry on $F_t^p$ for any $1 \leq p \leq \infty$;
\item $W_z^t W_w^t = e^{-\frac{\operatorname{Im}(z \cdot \overline w)}{t}}W_{z+w}^t$ for any $z, w \in \mathbb C^n$; 
\item For $1 \leq p < \infty$, $z \mapsto W_z^t$ is continuous in strong operator topology over $F_t^p$. The same holds true over $f_t^\infty$.
\end{enumerate}
\end{lem}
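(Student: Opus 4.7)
The plan is to verify each item by direct computation from the two explicit formulas $W_z^t g(w) = k_z^t(w)\, g(w-z)$ and $k_z^t(w) = e^{w \cdot \overline z/t - |z|^2/(2t)}$.

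For (1), I would substitute $u = w - z$ in $\|W_z^t g\|_{F_t^p}^p$ and observe that the translated Gaussian $e^{-p|u+z|^2/(2t)}$ produced by the change of variables is exactly compensated by $|k_z^t(u+z)|^p$, leaving $e^{-p|u|^2/(2t)}$; this yields $\|W_z^t g\|_{F_t^p} = \|g\|_{F_t^p}$. The case $p = \infty$ is the same cancellation performed pointwise inside the weighted supremum.

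For (2), I apply both sides to a test function $g$ and expand:
\begin{align*}
(W_z^t W_w^t g)(v) = k_z^t(v)\, k_w^t(v-z)\, g(v-z-w).
\end{align*}
Comparing with $k_{z+w}^t(v)\, g(v-z-w)$, and using $|z+w|^2 = |z|^2 + |w|^2 + 2\operatorname{Re}(z\cdot\overline w)$, the residual exponent collapses to the phase factor stated in the lemma.

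For (3), I would use the standard uniform-boundedness-on-a-dense-set principle. By (1) the family $\{W_z^t\}_{z\in\mathbb C^n}$ is isometric, hence uniformly bounded on each $F_t^p$ and on $f_t^\infty$, and finite combinations of reproducing kernels $\{K_w^t\}$ are dense in these spaces. A direct computation gives
\begin{align*}
W_z^t K_w^t = e^{-|z|^2/(2t) - z\cdot \overline w/t}\, K_{z+w}^t,
\end{align*}
so the problem reduces to showing that $z \mapsto K_z^t$ is continuous into $F_t^p$ (resp.\ $f_t^\infty$). For $1 \leq p < \infty$ this is a routine dominated-convergence argument: the integrand $|e^{w\cdot \overline z/t} - e^{w\cdot \overline{z_0}/t}|^p e^{-p|w|^2/(2t)}$ converges pointwise to $0$ and is dominated by a fixed Gaussian for $z$ in a bounded neighbourhood of $z_0$. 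The genuinely delicate case is $f_t^\infty$, where one has to estimate $\sup_w \bigl|e^{w\cdot \overline z/t} - e^{w\cdot \overline{z_0}/t}\bigr| e^{-|w|^2/(2t)}$; this is handled by splitting at $|w| = R$ and exploiting $K_w^t e^{-|\cdot|^2/(2t)} \in C_0(\mathbb C^n)$ to control the tail uniformly for $z$ close to $z_0$, while the compact part converges uniformly by continuity. This step is what distinguishes the $f_t^\infty$ statement from the corresponding (false) assertion on $F_t^\infty$, and is the only genuine obstacle in the proof.
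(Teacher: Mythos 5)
Your proof is correct. Items (1) and (2) are exactly the elementary computations the paper alludes to (note that the commutation factor should carry an $i$, i.e.\ $e^{-i\operatorname{Im}(z\cdot\overline w)/t}$, as your "phase factor collapses" computation actually shows and as the paper itself writes later when introducing the symplectic form; likewise the correct normalization is $k_z^t(w)=e^{w\cdot\overline z/t-|z|^2/(2t)}$, which you use). For (3) with $p<\infty$ your route differs slightly from the paper's: the paper invokes Scheff\'e's lemma, i.e.\ for arbitrary $g\in F_t^p$ one has pointwise convergence $W_z^tg(w)\to W_{z_0}^tg(w)$ together with $\|W_z^tg\|_p=\|g\|_p=\|W_{z_0}^tg\|_p$ from (1), which upgrades to norm convergence without any density argument; you instead use the uniform bound from (1) plus density of $\Span\{K_w^t\}$ and the explicit formula $W_z^tK_w^t=e^{-|z|^2/(2t)-z\cdot\overline w/t}K_{z+w}^t$, reducing everything to continuity of $z\mapsto K_z^t$, handled by dominated convergence. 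Both are standard and complete; Scheff\'e is marginally slicker for $p<\infty$, while your density reduction has the advantage of treating $F_t^p$ and $f_t^\infty$ uniformly, with the tail-splitting estimate you describe being precisely the "$\varepsilon$-$\delta$ argument" the paper leaves to the reader.
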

Note that (1) and (2) above follow from elementary computations, while (3) holds for $p < \infty$ by Scheff\'{e}'s lemma, and the case of $f_t^\infty$ can be proven by some standard $\varepsilon$-$\delta$ argument.

The main goal of this paper is the investigation of properties of linear operators which are defined on each of those Fock spaces. For doing so, we will usually denote $\mathbb F_t \in \{ F_t^p; ~1 \leq p \leq \infty\} \cup \{ f_t^\infty\}$ to denote any of the Fock spaces (with respect to the fixed parameter $t$).

Note that (2) of Lemma \ref{lem:prop_weyl} can also be written as $W_z^t W_w^t = e^{-i\sigma_t(z, w)}$, where $\sigma_t(z, w) = \im(z \cdot \overline{w})/t$ is a symplectic form on the 2n-dimensional real vector space $\mathbb C^n \cong \mathbb R^{2n}$. Since the projective unitary representation $z \mapsto W_z^t$ of $\mathbb C^n$ on $F_t^2$ is irreducible (we have $W_z^t 1 = K_z^t$, hence $1$ is a cyclic vector), the theorem of Stone and von Neumann asserts that there is a unitary map $\mathcal B_t: L^2(\mathbb R^n) \to F_t^2$ satisfying $\mathcal B_t V_{(x, \xi)}^t = W_{x + i\xi} \mathcal B_t$, where
\begin{align*}
    V_{(x, \xi)}^tf(s) = e^{i\frac{x\cdot \xi}{2t} - i\frac{s\cdot \xi}{t}}f(s-x), \quad x, \xi, s \in \mathbb R^n.
\end{align*}
$\mathcal B_t$ is unique up to multiplication with a complex constant of absolute value one. Upon choosing this constant correctly, $\mathcal B_t$ is the \emph{Bargmann transform}. We will not give the precise formula for $\mathcal B_t$ and refer to the literature instead \cite{Folland1989, Zhu2012}. What is more important to us is the fact that the Bargmann transform extends to an isometric and surjective map $M^p(\mathbb R^n) \to F_t^p(\mathbb C^n)$, cf.\ \cite{Signahl_Toft2011} and references therein. Here, $M^p(\mathbb R^n)$ denotes the $p$-modulation space. The precise definition of this space is not of importance to us and can be found in the references.

\section{The Fock space Wiener algebra}
Every bounded linear operator on $\mathbb F_t$, $\mathbb F_t \neq F_t^\infty$, is an integral operator. This can be seen as follows: Given $A \in \mathcal L(F_t^p)$ we have
\begin{align*}
Af(z) &= \langle Af, K_z^t\rangle = \langle f, A^\ast K_z^t \rangle\\
&= \int_{\mathbb C^n} f(w) \overline{A^\ast K_z^t(w)}~d\mu_t(w)\\
&= \int_{\mathbb C^n} f(w) \langle AK_w^t, K_z^t\rangle ~d\mu_t(w).
\end{align*}
The same holds true for $A \in \mathcal L( F_t^\infty)$ provided it leaves $f_t^\infty$ invariant (cf. \cite{Fulsche2022}).
The bivariate Berezin transform is given by
\begin{align*}
\widetilde{A}(w,z) = \langle Ak_w^t, k_z^t\rangle.
\end{align*}
The map $A \mapsto \widetilde{A}$ is injective (for $\mathbb F_t \neq F_t^\infty$). The integral kernel of an operator can be written in terms of the bivariate Berezin transform:
\begin{align*}
Af(z) &= \int f(w) \widetilde{A}(w,z) e^{\frac{|w|^2+|z|^2}{2t}}~d\mu_t(w)\\
&= \int f(w) \langle A K_w^t, K_z^t\rangle ~d\mu_t(w).
\end{align*}
Recall that we want to study properties of operators acting on each of the Fock spaces. By this, we will now understand integral operators which define a bounded linear operator for each choice of $\mathbb F_t$. Of course, we will try to deduce properties of such operators from their integral kernels. Before doing so, we will need a suitable criterion for the boundedness of such integral operators.
\begin{lem}\label{normestimate}
Let $k: \mathbb C^n \times \mathbb C^n \to \mathbb C$ be measurable such that
\begin{align*}
A_1 &:= \sup_{w \in \mathbb C^n} \int_{\mathbb C^n} |k(w,z)| e^{-\frac{|z|^2 + |w|^2}{2t}}~dz < \infty,\\
A_\infty &:= \sup_{z \in \mathbb C^n} \int_{\mathbb C^n} |k(w,z)| e^{-\frac{|z|^2 + |w|^2}{2t}}~dw < \infty. 
\end{align*}
Then, the integral operator
\begin{align*}
Af(z) = \int_{\mathbb C^n} f(w) k(w,z)~d\mu_t(w)
\end{align*}
is bounded on $L_t^p$ for any $p \in [1, \infty]$ with
\begin{align*}
\| A\|_{L_t^p \to L_t^p} \leq \left( \frac{1}{\pi t} \right)^n A_1^{1-\theta} A_\infty^\theta,
\end{align*}
where $\theta = 1 - \frac{1}{p}$.
\end{lem}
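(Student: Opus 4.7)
The plan is to reduce this weighted boundedness statement to the classical Schur test for unweighted integral operators combined with Riesz-Thorin interpolation. The key observation that makes this reduction clean is that there is a canonical isometric identification of each $L_t^p$ with Lebesgue $L^p$ that strips off the Gaussian weight and, crucially, produces a conjugated kernel that is independent of $p$.

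First, I would introduce the weight-removal isometry $U_p: L_t^p \to L^p(\mathbb C^n, dz)$ by
\begin{align*}
U_p f(z) = \left( \frac{p}{2\pi t}\right)^{n/p} f(z) e^{-|z|^2/(2t)}, \quad 1 \le p < \infty,
\end{align*}
with the obvious modification $U_\infty f(z) = f(z) e^{-|z|^2/(2t)}$ at $p = \infty$; a direct comparison with the definition of $\|\cdot\|_p$ shows each $U_p$ is an isometric isomorphism onto Lebesgue $L^p$. Then I would compute $B := U_p A U_p^{-1}$ acting on $L^p(\mathbb C^n, dz)$. Substituting $A f(z) = (\pi t)^{-n} \int f(w) k(w,z) e^{-|w|^2/t}\,dw$ and tracking the weights, the normalization factors $(p/(2\pi t))^{n/p}$ cancel out and one obtains
\begin{align*}
Bg(z) = \frac{1}{(\pi t)^n} \int_{\mathbb C^n} g(w)\, k(w,z)\, e^{-\frac{|w|^2+|z|^2}{2t}}\,dw.
\end{align*}
The point is that the kernel $K(w,z) = (\pi t)^{-n} k(w,z) e^{-(|w|^2+|z|^2)/(2t)}$ of $B$ does not depend on $p$, so a single operator on different $L^p$-spaces is now the object of study.

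Third, the hypotheses translate directly to $\sup_w \int |K(w,z)|\,dz \le A_1/(\pi t)^n$ and $\sup_z \int |K(w,z)|\,dw \le A_\infty/(\pi t)^n$. A Fubini argument on the former yields $\|B\|_{L^1 \to L^1} \le A_1/(\pi t)^n$ and the trivial pointwise estimate on the latter yields $\|B\|_{L^\infty \to L^\infty} \le A_\infty/(\pi t)^n$. Riesz-Thorin interpolation at the endpoints $p_0 = 1$, $p_1 = \infty$ then gives
\begin{align*}
\|B\|_{L^p \to L^p} \le \left(\frac{1}{\pi t}\right)^n A_1^{1-\theta} A_\infty^\theta, \qquad \theta = 1 - \tfrac{1}{p},
\end{align*}
and transferring back via the isometry $U_p$ produces the claimed bound for $A$ on $L_t^p$.

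There is no real obstacle here beyond bookkeeping; this is a classical Schur/interpolation argument dressed in the Fock-space weights. The only care required is verifying the cancellation of the prefactor $(p/(2\pi t))^{n/p}$ in the conjugation $U_p A U_p^{-1}$, and treating the $p = \infty$ endpoint with the slightly adjusted isometry formula so that both endpoints of the interpolation are covered.
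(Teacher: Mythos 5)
Your proof is correct, and it follows the same basic skeleton as the paper's: Schur-type estimates at the endpoints $p=1$ and $p=\infty$, followed by interpolation. The one genuine difference lies in how the interpolation step is executed. The paper performs the endpoint estimates directly on the weighted spaces $L_t^1$ and $L_t^\infty$ and then invokes complex interpolation of the scale $(L_t^1, L_t^\infty)$; since the underlying measures $\mu_{2t/p}$ vary with $p$, this implicitly relies on the identification $[L_t^1, L_t^\infty]_{[\theta]} = L_t^p$ recorded in the preliminaries, which is a change-of-density (Stein--Weiss type) interpolation fact rather than plain Riesz--Thorin. Your gauge transformation $U_p f = (p/(2\pi t))^{n/p} f e^{-|z|^2/(2t)}$ sidesteps this entirely: the conjugated kernel $K(w,z) = (\pi t)^{-n} k(w,z) e^{-(|w|^2+|z|^2)/(2t)}$ is $p$-independent, the normalization constants cancel as you note, and the interpolation reduces to the textbook Riesz--Thorin theorem on a single unweighted $L^p$ scale. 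What your route buys is self-containedness and transparency of the constant $(\pi t)^{-n} A_1^{1-\theta} A_\infty^\theta$; what the paper's route buys is consistency with the interpolation framework it needs anyway later (for the Fredholm index argument via the couple $(F_t^1, f_t^\infty)$). Both are complete proofs.
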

\begin{proof}
For $p = 1$, we obtain:
\begin{align*}
\| Af\|_{L_t^1} &= \left( \frac{1}{2\pi t}\right)^n \int_{\mathbb C^n} |Af(z)|e^{-\frac{|z|^2}{2t}}~dz\\
&\leq \left( \frac{1}{2\pi t} \right)^n \left( \frac{1}{\pi t} \right)^n \int_{\mathbb C^n} \int_{\mathbb C^n} |f(w)| |k(w,z)|e^{-\frac{|w|^2}{t} - \frac{|z|^2}{2t}}~dw~dz\\
&\leq \left( \frac{1}{2\pi t} \right)^n \left( \frac{1}{\pi t} \right)^n \int_{\mathbb C^n}|f(w)| e^{-\frac{|w|^2}{2t}} \int_{\mathbb C^n} |k(w,z)|e^{-\frac{|w|^2 + |z|^2}{2t}}~dz~dw\\
&\leq \left( \frac{1}{2\pi t} \right)^n \left( \frac{1}{\pi t} \right)^n \int_{\mathbb C^n}|f(w)| e^{-\frac{|w|^2}{2t}}dw \sup_{v \in \mathbb C^n} \int_{\mathbb C^n} |k(v,z)|e^{-\frac{|v|^2+|z|^2}{t}}~dz\\
&= \left( \frac{1}{\pi t} \right)^n A_1 \| f\|_{L_t^1}.
\end{align*}
For $p = \infty$ we get:
\begin{align*}
\| Af\|_{L_t^\infty} &= \sup_{z \in \mathbb C^n} |Af(z)|e^{-\frac{|z|^2}{2t}}\\
&\leq \sup_{z \in \mathbb C^n} \left( \frac{1}{\pi t} \right)^n \int_{\mathbb C^n} |f(w)| |k(w,z)|e^{-\frac{|w|^2}{t} - \frac{|z|^2}{2t}}~dw\\
&\leq \left( \frac{1}{\pi t} \right)^n \| f\|_{F_t^\infty} \sup_{z \in \mathbb C^n} \int_{\mathbb C^n} |k(w,z)|e^{-\frac{|z|^2 + |w|^2}{2t}}~dw\\
&= \left( \frac{1}{\pi t} \right)^n \| f\|_{L_t^\infty} A_\infty.
\end{align*}
Complex interpolation yields the result for any $p$.
\end{proof}
We now consider a special class of integral operators: We denote by $\mathcal W_t$ the class of all measurable $k: \mathbb C^n \times \mathbb C^n \to \mathbb C$ such that:
\begin{enumerate}
\item $k(w,z)$ is holomorphic in $z$ and anti-holomorphic in $w$;
\item There exist some non-negative $H \in L^1(\mathbb C^n)$ such that
\begin{equation}\label{defa0}
|k(w,z)| \leq e^{\frac{|z|^2 + |w|^2}{2t}} H(w-z).
\end{equation}
\end{enumerate}
On $\mathcal W_t$ we define the norm
\begin{align*}
\| k\|_{\mathcal W_t} := \frac{1}{(\pi t)^n}\inf \{ \| H\|_{L^1}; ~H \text{ satisfies } \eqref{defa0}\}.
\end{align*}
It is not hard to see that this expression is indeed a norm.

There is an immediate connection between $\mathcal W_t$ and operators on $\mathbb F_t$:
\begin{prop}\label{estimatenorm2}
Given $k \in \mathcal W_t$, the integral operator
\begin{align*}
A_kf(z) := \int_{\mathbb C^n} f(w) k(w,z)~d\mu_t(w)
\end{align*}
is a bounded linear operator on $\mathbb F_t$ for any $\mathbb F_t$ with
\begin{align*}
\| A_k\|_{\mathbb F_t\to \mathbb F_t} \leq  \| k\|_{\mathcal W_t}
\end{align*}
\end{prop}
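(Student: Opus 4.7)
My plan is to reduce the $L_t^p$-norm estimate to Lemma \ref{normestimate} by exploiting the translation-invariant structure built into the bound \eqref{defa0}, and then to verify separately the two extra conditions that carve the various $\mathbb F_t$ out of $L_t^p$: holomorphy (for $F_t^p$ and $F_t^\infty$) and vanishing at infinity of the weighted function (for $f_t^\infty$).

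First I would fix an arbitrary admissible $H \in L^1(\mathbb C^n)$ satisfying \eqref{defa0}. Translation invariance of Lebesgue measure then yields
\begin{align*}
    \sup_{w \in \mathbb C^n} \int_{\mathbb C^n} |k(w,z)| e^{-(|z|^2+|w|^2)/(2t)}\,dz \;\leq\; \int_{\mathbb C^n} H(u)\,du \;=\; \|H\|_{L^1},
\end{align*}
with the analogous bound in the other variable. Hence the constants $A_1$, $A_\infty$ of Lemma \ref{normestimate} are both at most $\|H\|_{L^1}$, and that lemma gives $\|A_k\|_{L_t^p \to L_t^p} \leq (\pi t)^{-n}\|H\|_{L^1}$ for every $p \in [1,\infty]$. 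Taking the infimum over all admissible $H$ then produces $\|A_k\|_{L_t^p \to L_t^p} \leq \|k\|_{\mathcal W_t}$.

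For $F_t^p$ with $p \in [1,\infty]$, I would next show that $A_k f$ is entire whenever $f \in F_t^p$. Morera's theorem applied one complex variable at a time does the job: the pointwise bound \eqref{defa0} supplies an $L^1$-dominating function (uniform on compact sets in $z$) that justifies Fubini in swapping a triangular contour integral in $z_j$ with the integration $d\mu_t(w)$, after which the inner contour integral vanishes because $k(w,\cdot)$ is holomorphic. Combined with the $L_t^p$-bound above, this yields $A_k \in \mathcal L(F_t^p)$ with the claimed norm.

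The last step, invariance of $f_t^\infty$, is the main obstacle, since the $L_t^\infty$-bound alone is insensitive to the vanishing-at-infinity condition that defines $f_t^\infty$. Given $f \in f_t^\infty$, I will write $g(w) = f(w) e^{-|w|^2/(2t)} \in C_0(\mathbb C^n)$; distributing the weights symmetrically between kernel and integrand and applying \eqref{defa0} produces the convolution-type estimate
\begin{align*}
    |A_k f(z)|\, e^{-|z|^2/(2t)} \;\leq\; (\pi t)^{-n} \int_{\mathbb C^n} |g(w)|\, H(w-z)\,dw \;=\; (\pi t)^{-n} (|g| \ast \check H)(z),
\end{align*}
where $\check H(u) = H(-u)$. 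Since $|g| \in C_0(\mathbb C^n)$ and $\check H \in L^1(\mathbb C^n)$, the classical fact $C_0 \ast L^1 \subseteq C_0$ (proved by uniformly approximating $g$ by compactly supported continuous functions and applying dominated convergence term by term) forces $(A_k f)\, e^{-|\cdot|^2/(2t)} \in C_0(\mathbb C^n)$. Together with the holomorphy already established, this places $A_k f$ in $f_t^\infty$ and completes the proof. The key point, and what makes this step work, is that \eqref{defa0} is of translation-invariant (convolution) type, which is precisely what lets one promote the $L^\infty$-bound to the required decay at infinity.
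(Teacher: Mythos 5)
Your proposal is correct and follows essentially the same route as the paper: reduce the $L_t^p$ bound to Lemma \ref{normestimate} via the translation-invariance of the bound \eqref{defa0}, take the infimum over admissible $H$, and establish holomorphy of $A_kf$ by a Fubini--Morera argument in each variable. Your convolution estimate $|A_kf(z)|e^{-|z|^2/(2t)} \leq (\pi t)^{-n}(|g|\ast\check{H})(z)$ together with $C_0 \ast L^1 \subseteq C_0$ is a nice explicit justification of the invariance of $f_t^\infty$, which the paper only asserts "follows easily."
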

\begin{proof}
Lemma \ref{normestimate} shows that $A_k$ is a bounded operator on $L_t^p$ satisfying the required norm estimate. We therefore only need to show that $A_k$ leaves $\mathbb F_t$ invariant. For this, fix $f \in F_t^\infty$ and let $\Delta \subset \mathbb C$ be a triangle. We then have:
\begin{align*}
\int_{\Delta} \int_{\mathbb C^n} |f(w) k(w,z)|d\mu_t(w) dz_k &\lesssim \int_{\Delta} \int_{\mathbb C^n} |f(w)| H(w-z) e^{-\frac{|w|^2}{2t} + \frac{|z|^2}{2t}} ~dw~dz_k\\
&\leq \| f\|_{F_t^\infty} \int_{\Delta} e^{\frac{|z|}{2t}} \int_{\mathbb C^n} H(w-z) ~dw~dz_k\\
&= \| f\|_{F_t^\infty} \int_{\Delta} e^{\frac{|z|^2}{2t}} \int_{\mathbb C^n} H(w-z)~dw~dz_k\\
&= \| f\|_{F_t^\infty} \| H\|_{L^1} \int_{\Delta} e^{\frac{|z|^2}{2t}}~dz_k\\
&< \infty.
\end{align*}
Therefore, we may apply Fubini's theorem in the following computation to exchange the order of integration:
\begin{align*}
\int_{\Delta} A_k (f)(z)~dz_k &= \int_{\Delta} \int_{\mathbb C^n} f(w) k(w,z) e^{-\frac{|w|^2}{t}}~dw~dz_k\\
&= \int_{\mathbb C^n} f(w) e^{-\frac{|w|^2}{2t}} \int_\Delta k(w,z) ~dz_k ~dw\\
&= 0,
\end{align*}
where the inner integral equals zero in the last step by Cauchy's Theorem. Therefore, by Morera's Theorem, $A_k(f)$ is holomorphic in every variable, hence it is holomorphic. In particular, $A_k$ leaves every $F_t^p$ invariant. From this it follows easily that $A_k$ leaves $f_t^\infty$ invariant.
\end{proof}
In principle, the space $\mathcal W_t$ consists essentially of well-behaved Berezin transforms:
\begin{prop}\label{prop:comp_Berezin}
Let $k \in \mathcal W_t$. Then,
\begin{align*}
\widetilde{A_k}(w,z) = e^{-\frac{|w|^2 + |z|^2}{2t}} k(w,z).
\end{align*}
\end{prop}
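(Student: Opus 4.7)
The plan is to unwind the definitions and reduce the claim to a single reproducing-kernel identity. Since $k_z^t = e^{-|z|^2/(2t)} K_z^t$, the reproducing property of $F_t^2$ yields $\langle f, k_z^t\rangle = e^{-|z|^2/(2t)} f(z)$ for every $f \in F_t^2$. Because $k_w^t \in F_t^2$ and by Proposition \ref{estimatenorm2} the operator $A_k$ is bounded on $F_t^2$, the function $A_k k_w^t$ lies in $F_t^2$, so
\begin{align*}
\widetilde{A_k}(w,z) = \langle A_k k_w^t, k_z^t\rangle = e^{-|z|^2/(2t)}(A_k k_w^t)(z).
\end{align*}

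Next I would evaluate $(A_k k_w^t)(z)$ directly from the defining integral formula, using the factorization $k_w^t = e^{-|w|^2/(2t)} K_w^t$:
\begin{align*}
(A_k k_w^t)(z) = \int_{\mathbb{C}^n} k_w^t(v) k(v,z)\, d\mu_t(v) = e^{-|w|^2/(2t)} \int_{\mathbb{C}^n} K_w^t(v) k(v,z)\, d\mu_t(v).
\end{align*}
The proposition therefore reduces to the single identity
\begin{align*}
\int_{\mathbb{C}^n} K_w^t(v) k(v,z)\, d\mu_t(v) = k(w,z).
\end{align*}

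This last identity is the only nontrivial step and the main obstacle. For fixed $z$, the function $v \mapsto k(v,z)$ is antiholomorphic, so $v \mapsto \overline{k(v,z)}$ is entire. Taking conjugates, the identity becomes the reproducing formula $\langle \overline{k(\cdot,z)}, K_w^t\rangle = \overline{k(w,z)}$, which would be immediate from the standard $F_t^2$-reproducing property were it not for the fact that the growth bound $|k(v,z)| \leq e^{(|v|^2+|z|^2)/(2t)} H(v-z)$ with $H \in L^1$ only places $\overline{k(\cdot,z)}$ in a space of $F_t^\infty$-type growth rather than in $F_t^2$. I would justify the identity by expanding $k(v,z) = \sum_\alpha c_\alpha(z) \overline v^\alpha$ and $K_w^t(v) = \sum_\alpha (\overline w^\alpha / \alpha! t^{|\alpha|}) v^\alpha$ as power series and integrating term by term, using the monomial orthogonality $\int v^\alpha \overline v^\beta d\mu_t(v) = \delta_{\alpha\beta}\alpha! t^{|\alpha|}$. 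The interchange of summation and integration is justified by the $L^1$-bound on $H$ together with the Gaussian decay of $d\mu_t$, in the same spirit as the estimates carried out in the proof of Lemma \ref{normestimate}. Combining the three displayed formulas then yields $\widetilde{A_k}(w,z) = e^{-(|w|^2+|z|^2)/(2t)} k(w,z)$.
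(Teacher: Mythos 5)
Your overall strategy is essentially the paper's: both arguments reduce the claim to a reproducing-kernel identity for $k$ in each variable, and your key identity $\int_{\mathbb C^n} K_w^t(v)\,k(v,z)\,d\mu_t(v)=k(w,z)$ is exactly the first of the two reproducing formulas the paper establishes (your use of the $F_t^2$ reproducing property for $A_k k_w^t\in F_t^2$ neatly replaces the second one). Where you diverge is in justifying that identity, and there you make the problem harder than it is. You claim the bound $|k(v,z)|\le e^{(|v|^2+|z|^2)/(2t)}H(v-z)$ only places $\overline{k(\cdot,z)}$ in a space of $F_t^\infty$-type growth; that is not the right reading of the hypothesis, since $H$ is merely $L^1$ and need not be bounded, so this is not a sup-bound at all. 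What the hypothesis does give, immediately, is
\begin{align*}
\int_{\mathbb C^n}|k(v,z)|\,e^{-\frac{|v|^2}{2t}}\,dv\;\le\; e^{\frac{|z|^2}{2t}}\,\|H\|_{L^1}\;<\;\infty,
\end{align*}
i.e.\ $\overline{k(\cdot,z)}\in F_t^1$ (and symmetrically $k(w,\cdot)\in F_t^1$). This is precisely the observation the paper makes: your identity is then an instance of the standard reproducing formula on $F_t^1$ (valid, e.g., because $F_t^1\subset F_t^2$), and no power-series computation is needed. As written, the series argument is also the weakest link of your proposal: interchanging the double sum (the Taylor series of $k(\cdot,z)$ against the series for $K_w^t$) with the integral requires some control on the coefficients $c_\alpha(z)$, and ``the $L^1$-bound on $H$ together with the Gaussian decay of $d\mu_t$'' does not by itself supply that control --- the cleanest way to obtain it is again the membership $\overline{k(\cdot,z)}\in F_t^1\subset F_t^2$. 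So the proof is correct in outline and repairable in one line: replace the growth-class claim and the term-by-term expansion by the $F_t^1$ observation above.
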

\begin{proof}
Note that
\begin{align*}
\int_{\mathbb C^n} |k(w,z)|d\mu_{2t}(w) \leq e^{\frac{|z|^2}{2t}} \int_{\mathbb C^n} H(w-z)~dw < \infty,
\end{align*}
hence $k(\cdot, z) \in L_t^1$ and similarly $k(w, \cdot) \in L_t^1$. Therefore, $\overline{k(\cdot, z)}, k(w, \cdot) \in F_t^1$ and we obtain the reproducing formulas
\begin{align*}
\langle k(\cdot, z), \overline{K_w^t}\rangle = k(w, z) = \langle k(w, \cdot), K_z^t\rangle.
\end{align*}
This yields:
\begin{align*}
e^{\frac{|w|^2 + |z|^2}{2t}}\widetilde{A_k}(w,z) &= \langle A_k K_w^t, K_z^t\rangle\\
&= \int_{\mathbb C^n} \int_{\mathbb C^n} k(u,v) K_w^t(u)~d\mu_t(u) \overline{K_z^t(v)}~d\mu_t(v)\\
&= \int_{\mathbb C^n} \langle k(\cdot, v), \overline{K_w^t}\rangle \overline{K_z^t(v)}~d\mu_t(v)\\
&= \int_{\mathbb C^n} k(w, v) \overline{K_z^t(v)}~d\mu_t(v)\\
&= \langle k(w, \cdot), K_z^t\rangle\\
&= k(w,z).\qedhere
\end{align*}
\end{proof}
We endow $\mathcal W_t$ with the product
\begin{align*}
k_1 \circ k_2(w,z) = \int_{\mathbb C^n} k_1(w, \xi) k_2(\xi, z)~d\mu_t(\xi),
\end{align*}
and the involution
\begin{align*}
k^\ast (w, z) := \overline{k(z,w)}.
\end{align*}
Of course, the product $\circ$ is compatible with the composition of operators and their Berezin transform: For $A, B \in \mathcal L(F_t^2)$, it is
\begin{align*}
\widetilde{AB}(w, z) &= \langle AB k_w^t, k_z^t\rangle\\
&= e^{-\frac{|z|^2+|w|^2}{2t}} \langle B K_w^t, A^\ast K_z^t\rangle\\
&= e^{-\frac{|z|^2+|w|^2}{2t}} \int_{\mathbb C^n} BK_w^t(\xi) \overline{A^\ast K_z^t(\xi)}~d\mu_t(\xi)\\
&= e^{-\frac{|z|^2+|w|^2}{2t}} \int_{\mathbb C^n} \langle B K_w^t, K_\xi^t\rangle \langle A K_\xi^t, K_z^t\rangle ~d\mu_t(\xi).
\end{align*}
\begin{thm}\label{thm:algebra}
Endowed with above product and norm, $\mathcal W_t$ becomes an involutive Banach algebra.
\end{thm}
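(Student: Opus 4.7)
The plan is to check: (i) the norm axioms and the involution identities, (ii) closure of $\mathcal W_t$ under $\circ$ with submultiplicativity of the norm, and (iii) completeness. Items (i)--(ii) are essentially book-keeping via Young's inequality and Fubini; item (iii) is the substantive step.

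For closure and submultiplicativity: given dominators $H_1, H_2$ for $k_1, k_2$ as in \eqref{defa0}, I compute directly
\[
|k_1 \circ k_2(w, z)| \leq \tfrac{1}{(\pi t)^n} e^{(|w|^2+|z|^2)/(2t)}(H_1 \ast H_2)(w-z),
\]
so $\tfrac{1}{(\pi t)^n}(H_1 \ast H_2) \in L^1$ is a valid dominator for the product (in particular the defining integral converges absolutely a.e.\ in $(w,z)$), and Young's convolution inequality then yields $\|k_1 \circ k_2\|_{\mathcal W_t} \leq \|k_1\|_{\mathcal W_t}\|k_2\|_{\mathcal W_t}$ after taking the infimum. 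The integral only gives an a.e.-defined object, so anti-holomorphy in $w$ and holomorphy in $z$ must be recovered indirectly: Fubini (justified by the same bound) identifies $A_{k_1 \circ k_2} = A_{k_2} A_{k_1}$ on $F_t^2$, and Proposition \ref{prop:comp_Berezin} applied to this composition gives $(k_1 \circ k_2)(w, z) = \langle A_{k_2} A_{k_1} K_w^t, K_z^t\rangle$, which is manifestly anti-holomorphic in $w$ and holomorphic in $z$. Associativity of $\circ$ is an analogous application of Fubini, and the involution identities $(k^\ast)^\ast = k$, $(k_1 \circ k_2)^\ast = k_2^\ast \circ k_1^\ast$ together with $\|k^\ast\|_{\mathcal W_t} = \|k\|_{\mathcal W_t}$ follow from direct substitution and the fact that $H \mapsto H(-\cdot)$ preserves the $L^1$-norm.

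The substantive step is completeness. For a Cauchy sequence $\{k_n\}$, the pointwise formula $k(w,z) = \langle A_k K_w^t, K_z^t\rangle$ from Proposition \ref{prop:comp_Berezin} combined with Proposition \ref{estimatenorm2} gives
\[
|k_n(w,z) - k_m(w,z)| \leq \|k_n - k_m\|_{\mathcal W_t}\, e^{(|w|^2+|z|^2)/(2t)},
\]
so $\{k_n\}$ is uniformly Cauchy on compacta and converges locally uniformly to a continuous $k$, which inherits the holomorphy/anti-holomorphy. To promote this to $\mathcal W_t$-convergence, I introduce the pointwise minimal dominating function
\[
\phi_g(v) := \sup_{u \in \mathbb C^n} |g(u+v, u)|\, e^{-(|u+v|^2 + |u|^2)/(2t)},
\]
which is measurable for continuous $g$ (the supremum reduces to a countable dense set of $u$'s). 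Then $\phi_g$ is itself a dominator, yielding $\|g\|_{\mathcal W_t} \leq \tfrac{1}{(\pi t)^n}\|\phi_g\|_{L^1}$, while any dominator $H$ pointwise majorises $\phi_g$, yielding $\|\phi_g\|_{L^1} \leq (\pi t)^n \|g\|_{\mathcal W_t}$. Applied with $g = k_m - k_n$, together with Fatou's lemma applied to $\phi_{k - k_n}(v) \leq \liminf_m \phi_{k_m - k_n}(v)$ (from $k_m \to k$ pointwise), this produces
\[
\|k - k_n\|_{\mathcal W_t} \leq \tfrac{1}{(\pi t)^n}\|\phi_{k - k_n}\|_{L^1} \leq \liminf_{m \to \infty}\|k_m - k_n\|_{\mathcal W_t},
\]
which tends to zero by the Cauchy hypothesis, simultaneously showing $k - k_n \in \mathcal W_t$ (hence $k \in \mathcal W_t$) and $k_n \to k$ in norm. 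The main obstacle is precisely this identification of $\phi_g$ together with the Fatou step; everything else reduces to Young's convolution inequality, Fubini, and the Berezin identity of Proposition \ref{prop:comp_Berezin}.
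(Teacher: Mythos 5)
Your proof is correct, and the product part of it coincides with the paper's: the same convolution estimate $|k_1\circ k_2(w,z)|\leq (\pi t)^{-n}e^{(|w|^2+|z|^2)/(2t)}(H_1\ast H_2)(w-z)$, Young's inequality for submultiplicativity, and recovery of sesqui-holomorphy through the operator identity with $K_w^t$, $K_z^t$ (your version $\langle A_{k_2}A_{k_1}K_w^t,K_z^t\rangle$ in fact has the composition in the correct order, whereas the paper's displayed formula reverses it; this is immaterial for the holomorphy conclusion). You also verify associativity and the involution axioms, which the paper leaves implicit. Where you genuinely diverge is completeness. The paper uses the absolutely-convergent-series criterion: choose dominators $H_m$ with $\sum_m\|H_m\|_{L^1}<\infty$ and dominate the limit kernel by $\sum_m H_m$. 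You instead treat a Cauchy sequence directly, observing that the infimum defining $\|\cdot\|_{\mathcal W_t}$ is attained at the explicit minimal dominator $\phi_g(v)=\sup_u|g(u+v,u)|e^{-(|u+v|^2+|u|^2)/(2t)}$, so that $\|g\|_{\mathcal W_t}=(\pi t)^{-n}\|\phi_g\|_{L^1}$ for continuous $g$, and then passing to the limit with Fatou. This is valid; the only point worth tightening is that a dominator $H$ majorises $\phi_g$ only almost everywhere (the fibres $\{w-z=v\}$ are null sets, so one should apply Fubini in the coordinates $(v,u)=(w-z,z)$ and use continuity of $g$ to replace the essential supremum in $u$ by the supremum), which suffices for the $L^1$-norm comparison. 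Your route is slightly longer but yields an exact formula for the norm as the $L^1$-norm of a canonical dominator, which the paper's argument does not provide; the paper's route is shorter but tacitly needs the routine extra remark that the partial sums converge to $k$ in $\mathcal W_t$-norm, which your Fatou step handles explicitly.
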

\begin{proof}
Clearly, $\mathcal W_t$ is a linear space. We show that the product maps again into $\mathcal W_t$: We have
\begin{align*}
|k_1 \circ k_2(w,z)| &\leq \frac{1}{(\pi t)^n} e^{\frac{|z|^2 + |w|^2}{2t}} \int_{\mathbb C^n} H_1(w-\xi) H_2(\xi - w)~d\xi\\
&= \frac{1}{(\pi t)^n} e^{\frac{|z|^2 + |w|^2}{2t}} \int_{\mathbb C^n} H_1(\xi) H_2(w - z - \xi)~d\xi\\
&= \frac{1}{(\pi t)^n} e^{\frac{|z|^2 + |w|^2}{2t}}H_1 \ast H_2 (w-z).
\end{align*}
Since
\begin{align*}
\| H_1 \ast H_2\|_{L^1} \leq \| H_1\|_{L^1} \| H_2\|_{L^1},
\end{align*}
we obtain that $k_1 \circ k_2 \in \mathcal W_t$ can be estimated by an appropriate $L^1$ function. Taking the infimum over all such $H_1, H_2$, we obtain that the norm is submultiplicative. $k_1 \circ k_2(w, z)$ is of course also holomorphic in $z$ and anti-holomorphic in $w$, as
\begin{align*}
k_1 \circ k_2(w, z) = \langle A_{k_1} A_{k_2} K_w^t, K_z^t\rangle,
\end{align*} 
which is holomorphic in $z$ and anti-holomorphic in $w$.

Finally, we show that $\mathcal W_t$ is complete with respect to the norm. Let $(k_m)_{m\in \mathbb N}$ be a sequence in $\mathcal W_t$ such that
\begin{align*}
\sum_{m=1}^\infty \| k_m\|_{\mathcal W_t} < \infty.
\end{align*}
Then, by the norm estimate in Lemma \ref{normestimate}, we obtain (say, over $F_t^2$)
\begin{align*}
\sum_{m=1}^\infty \| A_{k_m}\| < \infty.
\end{align*}
Since $\mathcal L(F_t^2)$ is complete, we get
\begin{align*}
\sum_{m=1}^\infty A_{k_m} =: A \in \mathcal L(F_t^2).
\end{align*}
Set now $k(w,z) := \langle A K_w^t, K_z^t\rangle$. Since the series of operators converges in operator norm, it is by Proposition \ref{prop:comp_Berezin}
\begin{align*}
k(w,z) &= \sum_{m=1}^\infty \langle A_{k_m} K_w^t, K_z^t\rangle\\
&= \sum_{m=1}^\infty k_m(w,z)
\end{align*}
and the series converges pointwise. Now, we can choose $H_m \in L^1(\mathbb C^n)$ non-negative such that
\begin{align*}
|k_m(w,z)| \leq e^{\frac{|z|^2 + |w|^2}{2t}} H_m(w-z)
\end{align*}
and
\begin{align}\label{eqhn}
\sum_{m=1}^\infty \| H_m\|_{L^1} < \infty.
\end{align}
Thus,
\begin{align*}
\left | k(w,z)\right | &\leq \sum_{m=1}^\infty  |k_m(w,z)|\\
&\leq e^{\frac{|z|^2 + |w|^2}{2t}} \sum_{m=1}^\infty H_m(w-z).
\end{align*}
Due to \eqref{eqhn}, the right-hand side of the above estimates is finite almost everywhere and satisfies
\begin{align*}
\| k\|_{\mathcal W_t} \leq \sum_{m=1}^\infty \| H_m\|_{L^1} < \infty.
\end{align*}
This shows $\sum_{m=1}^\infty k_m = k \in \mathcal W_t$.
\end{proof}
Let us denote
\begin{align*}
\mathcal W_t(\mathbb F_t) := \{ A \in \mathcal L(\mathbb F_t); ~A = A_k, ~\text{some } k \in \mathcal W_t\}.
\end{align*}
Since for $A, B \in \mathcal W_t(\mathbb F_t)$, the operator $AB$ is naturally the integral operator with kernel $k_1 \circ k_2$ (where $k_1, k_2$ are the kernels of $A, B$ respectively), $\mathcal W_t$ can be considered as a subalgebra of $\mathcal L(\mathbb F_t)$. The estimate from Proposition \ref{estimatenorm2} shows:
\begin{prop}
The map
\begin{align*}
\mathcal W_t &\to \mathcal L(\mathbb F_t)\\
k &\mapsto A_k
\end{align*}
is a continuous and injective Banach algebra homomorphism with range $\mathcal W_t(\mathbb F_t)$.
\end{prop}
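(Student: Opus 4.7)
The plan is to establish the four constituent properties—linearity, continuity, multiplicativity, injectivity—one at a time, with the range identification being tautological. Linearity is immediate from the linearity of the integral in the kernel, and continuity with the bound $\|A_k\|_{\mathbb F_t\to\mathbb F_t} \leq \|k\|_{\mathcal W_t}$ is exactly the content of Proposition \ref{estimatenorm2}. The range coincides with $\mathcal W_t(\mathbb F_t)$ by the very definition of the latter set. This leaves multiplicativity and injectivity as the substantive points.

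For multiplicativity, I would fix $k_1, k_2 \in \mathcal W_t$ with off-diagonal bounds realized by $H_1, H_2 \in L^1(\mathbb C^n)$, pick $f \in \mathbb F_t$, and write out
\begin{align*}
A_{k_1}(A_{k_2}f)(z) = \int_{\mathbb C^n}\int_{\mathbb C^n} f(u)\, k_2(u,w)\, k_1(w,z)\, d\mu_t(u)\, d\mu_t(w).
\end{align*}
To justify Fubini's theorem I would exploit the continuous embedding $\mathbb F_t \hookrightarrow F_t^\infty$ (which provides the pointwise bound $|f(u)| \lesssim e^{|u|^2/(2t)}\|f\|_{F_t^\infty}$) together with the two $H$-bounds on $k_1, k_2$. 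The Gaussian factors combine with $d\mu_t(u)\, d\mu_t(w)$ to cancel completely, leaving the finite bound $(\pi t)^{-2n}\|f\|_{F_t^\infty}\|H_1\|_{L^1}\|H_2\|_{L^1} e^{|z|^2/(2t)}$ for each fixed $z$. Once Fubini is in hand, the iterated integral collapses into an integral operator whose kernel is the $\circ$-product of $k_1$ and $k_2$ as defined in the paper, yielding multiplicativity.

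For injectivity, the crucial observation is that every reproducing kernel $K_w^t$ lies in $\mathbb F_t$ for any choice of Fock space in the family. Using the reproducing formulas from the proof of Proposition \ref{prop:comp_Berezin}, a direct computation gives $A_k K_w^t(z) = \int K_w^t(u)\, k(u,z)\, d\mu_t(u) = k(w,z)$. So if $A_k$ vanishes as an operator on $\mathbb F_t$, then $A_k K_w^t$ is the zero element of $\mathbb F_t$ (hence the zero holomorphic function) for every $w$, forcing $k(w,z) = 0$ pointwise and therefore $k = 0$ in $\mathcal W_t$. The main technical obstacle of the argument lies in the Fubini verification within the multiplicativity step, requiring simultaneous control of both off-diagonal bounds and the growth of $f$; the remaining assertions follow cleanly from results established earlier in the paper or directly from the definitions.
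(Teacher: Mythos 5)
Your proof is correct and follows essentially the same route as the paper, which gives almost no argument of its own (it cites Proposition \ref{estimatenorm2} for continuity and the remark preceding the statement for multiplicativity); you supply the routine Fubini justification and the reproducing-kernel computation $A_kK_w^t(z)=k(w,z)$ for injectivity, both of which check out. One inherited quibble: with the paper's stated definition $k_1\circ k_2(w,z)=\int k_1(w,\xi)k_2(\xi,z)\,d\mu_t(\xi)$ and the kernel convention $A_kf(z)=\int f(w)k(w,z)\,d\mu_t(w)$, your iterated integral actually yields the kernel $(u,z)\mapsto\int k_2(u,w)k_1(w,z)\,d\mu_t(w)=k_2\circ k_1(u,z)$, so $A_{k_1}A_{k_2}=A_{k_2\circ k_1}$ and the map is, strictly speaking, an anti-homomorphism unless the order of the factors in the definition of $\circ$ is swapped --- this is a convention slip present in the paper itself that your write-up reproduces, not a defect in your reasoning.
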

Note that operators from $\mathcal W_t(\mathbb F_t)$ are weakly localized in the sense of \cite{Isralowitz_Mitkovski_Wick}. In particular, $\mathcal W_t(\mathbb F_t)$ is properly contained in $\mathcal L(\mathbb F_t)$ (e.g., $Uf(z) = f(-z)$ is not contained in the Banach algebra generated by all weakly localized operators). 

As in \cite{Fulsche2020, Fulsche2022} we can consider the Banach algebra
\begin{align*}
    \mathcal C_1(\mathbb F_t) := \{ A \in \mathcal L(\mathbb F_t): ~\| \alpha_z(A) - A\|_{op} \to 0, ~|z| \to 0\}.
\end{align*}
Here, we used the notation $\alpha_z(A)$ for the shift of the operator $A$:
\begin{align*}
    \alpha_z(A) := W_z^t A W_{-z}^t.
\end{align*}
It is known (cf.\ \cite[Theorem 3.1]{Fulsche2020} or \cite[Theorem 4.1]{Fulsche2022}) that
\begin{align*}
\mathcal C_1(\mathbb F_t) =  \overline{Alg} \{ T_f^t \in \mathcal L(\mathbb F_t): ~f \in L^\infty(\mathbb C^n)\}.
\end{align*}
Here, $\overline{Alg}$ denotes the Banach algebra generated by the set. We have the following:
\begin{thm}
It is $\mathcal W_t(\mathbb F_t) \subset \mathcal C_1(\mathbb F_t)$ for every choice of $\mathbb F_t$. 
\end{thm}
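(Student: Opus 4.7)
The key reduction is that Proposition~\ref{estimatenorm2} controls the operator norm on each $\mathbb{F}_t$ by the $\mathcal{W}_t$-norm of the underlying kernel. Hence for $A = A_k$ with $k \in \mathcal{W}_t$, it suffices to prove
\[
\|\alpha_z(k) - k\|_{\mathcal{W}_t} \to 0 \quad\text{as } z \to 0.
\]
A direct computation using $\alpha_z(A) = W_z^t A W_{-z}^t$ and the explicit form of the Weyl operators yields the integral kernel
\[
\alpha_z(k)(w,u) = k_z^t(u)\,\overline{k_z^t(w)}\,k(w-z, u-z),
\]
and one verifies that $|k_z^t(u)\,\overline{k_z^t(w)}|$ together with the coordinate shift exactly cancels against the Gaussian exponent in the $\mathcal{W}_t$-bound, so that the same dominating $H \in L^1$ serves for both $k$ and $\alpha_z(k)$. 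In particular, $\alpha_z$ is an isometry of $\mathcal{W}_t$. Via the intertwining $k_{w-z}^t = e^{i\im(w\cdot \bar z)/t}W_{-z}^tk_w^t$ and Proposition~\ref{prop:comp_Berezin}, the bivariate Berezin transform shifts as
\[
\widetilde{\alpha_z(A_k)}(w,u) = e^{i\im((u-w)\cdot \bar z)/t}\,\tilde k(w-z, u-z).
\]

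The plan is to bound the pointwise difference of Berezin transforms by
\[
\bigl|\widetilde{\alpha_z(A_k)}(w,u) - \tilde k(w,u)\bigr| \leq \bigl|e^{i\im((u-w)\cdot \bar z)/t} - 1\bigr|\,H(u-w) + |\tilde k(w-z, u-z) - \tilde k(w,u)|
\]
and control each summand as an $L^1$-function of $v := u-w$ after taking the supremum over $w$. The first summand already depends only on $v$ and tends to $0$ in $L^1$ by dominated convergence, using $|e^{i\im(-v\cdot \bar z)/t}-1|\le 2$ pointwise and $H \in L^1$. For the second, setting $G_v(w) := \tilde k(w, w+v)$ turns the supremum over $w$ into $\sup_w|G_v(w-z)-G_v(w)|$, again dominated by $2H(-v)$. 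By a second dominated-convergence step in $v$, it suffices to prove that $G_v$ is uniformly continuous on $\mathbb{C}^n$ for each $v$.

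The main obstacle is proving this uniform continuity. Since $|\tilde k(w,u)| \leq \|A_k\|_{F_t^2\to F_t^2}$ holds automatically, one may replace $H$ by $\min(H, \|A_k\|_{F_t^2\to F_t^2})$ and assume, without loss of generality, $H \in L^1 \cap L^\infty$. The trick is to factor $k(w,u) = e^{u\cdot \bar w/t}F(w,u)$, where $F(w,u) := k(w,u)\,e^{-u\cdot \bar w/t}$ is still anti-holomorphic in $w$ and holomorphic in $u$, but now satisfies the improved bound $|F(w,u)| \leq e^{|w-u|^2/(2t)}H(w-u)$ whose Gaussian exponent depends only on $w-u$. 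A short manipulation gives $G_v(w) = e^{-|v|^2/(2t)}\,e^{i\im(v\cdot \bar w)/t}\,F(w, w+v)$. Cauchy's integral formula applied to $F$ on a polydisc of arbitrary radius $r > 0$ in its holomorphic (and anti-holomorphic) slot then yields a pointwise $w$-uniform bound
\[
|\nabla G_v(w)| \lesssim |v|\,H(-v) + r^{-1} e^{(r|v|+r^2/2)/t}\,\|H\|_{L^\infty},
\]
which is finite for each $v$ and delivers the uniform Lipschitz continuity of $G_v$ needed to close the argument. Should the regularity reduction on $H$ prove awkward in the details, a fallback is density: approximate $k$ in $\mathcal{W}_t$-norm by finite combinations of elementary kernels $\overline{K_{a_j}^t(w)}K_{b_j}^t(u)$, for which $G_v$ is manifestly a Schwartz function of $w$, and extend using the $\mathcal{W}_t$-isometry of $\alpha_z$.
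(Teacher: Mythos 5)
Your argument is correct in its main line, but it takes a completely different route from the paper. The paper's proof is a two-line citation: operators in $\mathcal W_t(\mathbb F_t)$ are weakly localized in the sense of Isralowitz--Mitkovski--Wick, and by Hagger's theorem the Banach algebra generated by the weakly localized operators coincides with $\mathcal C_1(\mathbb F_t)$ (for $p<\infty$; the case $F_t^\infty$ is handled by duality). You instead prove norm-continuity of $z\mapsto\alpha_z(A_k)$ directly, by showing $\|\alpha_z(k)-k\|_{\mathcal W_t}\to0$ and invoking Proposition~\ref{estimatenorm2}, which handles all $\mathbb F_t$ (including $F_t^\infty$) simultaneously without any duality step. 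The key computations check out: the shifted Berezin transform is $e^{i\im((u-w)\cdot\bar z)/t}\tilde k(w-z,u-z)$, so the same $H$ dominates; the reduction to $H\in L^1\cap L^\infty$ via $\min(H,\|A_k\|_{F_t^2\to F_t^2})$ is legitimate; the identity $G_v(w)=e^{-|v|^2/(2t)}e^{i\im(v\cdot\bar w)/t}F(w,w+v)$ with $|F(w,u)|\le e^{|w-u|^2/(2t)}H(w-u)$ is exactly right; and the Cauchy estimate on a polydisc of fixed radius gives a $w$-uniform Lipschitz bound for $G_v$, after which two applications of dominated convergence close the argument. (There is a harmless orientation slip, $H(u-w)$ versus $H(w-u)$, which one removes by symmetrizing $H$.) What your approach buys is a self-contained, elementary proof that avoids Hagger's structural theorem entirely and in fact yields the stronger statement that translations act strongly continuously on the Banach algebra $\mathcal W_t$ itself; what the paper's approach buys is brevity and the additional information that $\mathcal W_t(\mathbb F_t)$ sits inside the Toeplitz algebra via the localization machinery.

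One caveat: your proposed \emph{fallback} is not viable. Finite combinations of the elementary kernels $\overline{K_{a_j}^t(w)}K_{b_j}^t(u)$ correspond to finite-rank operators, and since the $\mathcal W_t$-norm dominates the operator norm on $F_t^2$, density of these in $\mathcal W_t$ would force every $A_k$ (in particular the identity, whose kernel $e^{u\cdot\bar w/t}$ lies in $\mathcal W_t$) to be compact. So you must rely on the Cauchy-estimate argument; fortunately it is sound as written.
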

\begin{proof}
 R.\ Hagger shows in \cite{Hagger2021} that the Banach algebra generated by the weakly localized operators coincides with $\mathcal C_1(\mathbb F_t)$ whenever $1 < p < \infty$. His proof works in entirely the same way over $F_t^1$, cf.\ the discussion preceding \cite[Theorem 6.2]{Fulsche2022}. Since $\mathcal W_t(\mathbb F_t)$ sits between the sufficiently localized and the weakly localized operators, the claim follows. For $\mathbb F_t = F_t^\infty$ or $\mathbb F_t = f_t^\infty$, the statement follows by duality.
\end{proof}
Since every Toeplitz operator with bounded symbol is sufficiently localized \cite[Proposition 4.1]{Xia_Zheng2013}, the sums of products of Toeplitz operators with bounded symbols are contained in $\mathcal W_t(\mathbb F_t)$ for every $\mathbb F_t$.

Let us start with the investigation of independence of spectral properties of $A_k$, $k \in \mathcal W_t$, of the particular choice of $\mathbb F_t$. The first statement of this kind is regarding compactness:
\begin{thm}
Let $k \in \mathcal W_t$. Then, compactness of $A_k$ is independent of the choice of $\mathbb F_t$.
\end{thm}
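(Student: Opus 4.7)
The cleanest approach I would take is via the Berezin-transform characterization of compactness, exploiting the fact that, by the preceding theorem, $A_k \in \mathcal{C}_1(\mathbb{F}_t)$ for every $\mathbb{F}_t$, i.e.\ $A_k$ lies in the Banach algebra generated by Toeplitz operators with bounded symbols. By Proposition \ref{prop:comp_Berezin}, the scalar Berezin transform
\begin{align*}
\widetilde{A_k}(z) := \widetilde{A_k}(z,z) = e^{-|z|^2/t}\, k(z,z)
\end{align*}
depends only on $k$ itself. So if one can reduce compactness to a condition purely on this scalar function, independence of $\mathbb{F}_t$ follows at once.

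The steps I would carry out, in order, are the following. First I would invoke the known characterization: for $A \in \mathcal{C}_1(\mathbb{F}_t)$, the operator $A$ is compact on $\mathbb{F}_t$ if and only if $\widetilde{A}(z) \to 0$ as $|z| \to \infty$. For $p \in (1,\infty)$ this lifts from the classical results of Englis \cite{Englis1999} and Bauer--Isralowitz \cite{Bauer_Isralowitz2012} on finite sums of products of Toeplitz operators to the full Banach algebra $\mathcal{C}_1(F_t^p)$ via Isralowitz--Mitkovski--Wick \cite{Isralowitz_Mitkovski_Wick} and Hagger \cite{Hagger2021} (the latter providing the identification $\mathcal{C}_1 = \overline{Alg}\{T_f^t\}$ used a moment ago). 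For the endpoint spaces $F_t^1$, $F_t^\infty$ and $f_t^\infty$, the analogous characterization is contained in the Banach algebra framework of \cite{Fulsche2020, Fulsche2022}. Next I would apply this to $A_k$: since $A_k \in \mathcal{C}_1(\mathbb{F}_t)$ for every choice of $\mathbb{F}_t$, compactness of $A_k$ on $\mathbb{F}_t$ is equivalent to $e^{-|z|^2/t}\,k(z,z) \to 0$ as $|z| \to \infty$. This condition involves $k$ alone and is manifestly independent of which Fock space we work on, so the theorem follows.

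The main expected obstacle is not the logic of the argument but the bookkeeping of references: one must ensure that the Berezin-transform criterion for compactness in $\mathcal{C}_1(\mathbb{F}_t)$ is cleanly available for the endpoint cases $F_t^1$, $F_t^\infty$, and $f_t^\infty$, since the standard references are phrased for $p \in (1,\infty)$. This should come out of the abstract framework of \cite{Fulsche2020, Fulsche2022}, possibly supplemented by a duality argument between $F_t^1$ and $f_t^\infty$ via the pairing from Section 2. Should a direct citation for all endpoints be unavailable, a fallback strategy is to prove one implication by the Berezin transform on, say, $F_t^2$, and transfer compactness to neighbouring spaces via complex interpolation of compact operators (a Calder\'on--Cwikel-type theorem), combined with the identification $[F_t^1, f_t^\infty]_{[\theta]} = F_t^p$ recalled in the preliminaries; this would at least propagate compactness from the endpoints through all intermediate $p$.
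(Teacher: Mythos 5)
Your proposal is correct and follows essentially the same route as the paper: the author likewise uses $A_k \in \mathcal C_1(\mathbb F_t)$ together with the Berezin-transform characterization of compactness (citing \cite{Bauer_Isralowitz2012, Fulsche2020, Fulsche2022}, the latter two of which do cover the endpoint spaces), and concludes since the condition $k(z,z)e^{-|z|^2/t} \in C_0(\mathbb C^n)$ does not depend on $\mathbb F_t$. Your interpolation fallback is not needed.
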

\begin{proof}
    Since $A_k \in \mathcal C_1(\mathbb F_t)$, we know that $A_k \in \mathcal K(\mathbb F_t)$ if and only if $\widetilde{A_k}(z, z) = k(z,z)e^{-\frac{|z|^2}{t}} \in C_0(\mathbb C^n)$ (cf.\ \cite[Theorem 1.1]{Bauer_Isralowitz2012}, \cite[Proposition 3.5]{Fulsche2020} or \cite[Theorem 4.4]{Fulsche2022}). Since this last property is clearly independent of the choice of $\mathbb F_t$, the statement follows.
\end{proof}

The following result is at the heart of everything that follows:
\begin{thm}\label{thm:inverse_closed}
$\mathcal W_t(\mathbb F_t)$ is inverse-closed in $\mathcal L(\mathbb F_t)$ for any choice of $\mathbb F_t$.
\end{thm}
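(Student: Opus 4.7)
The plan is to reduce the statement to the inverse-closedness of the Sj\"ostrand class of pseudodifferential operators on $L^2(\mathbb{R}^n)$, a classical Wiener-type lemma due to Sj\"ostrand and Gr\"ochenig.

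To set things up, I would transport the question to modulation spaces via the Bargmann transform $\mathcal{B}_t$, which intertwines the Weyl operators $W_{x+i\xi}^t$ with the time-frequency shifts $V_{(x,\xi)}^t$ and extends to an isometric isomorphism $M^p(\mathbb{R}^n) \to F_t^p(\mathbb{C}^n)$. Under this isomorphism, membership in $\mathcal{W}_t(\mathbb{F}_t)$ translates into a condition on the Gabor matrix of the transferred operator with respect to the Gaussian window: by Proposition \ref{prop:comp_Berezin}, the condition $k \in \mathcal{W}_t$ is equivalent to $|\widetilde{A_k}(w,z)| \le H(w-z)$ for some $H \in L^1(\mathbb{C}^n)$, and since the normalized reproducing kernels $k_z^t$ correspond up to a phase to the Gaussian Gabor atoms $V_{(x,\xi)}^t \phi_0$, this is precisely Gr\"ochenig's characterization of the Sj\"ostrand class $M^{\infty,1}(\mathbb{R}^{2n})$.

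Next I would invoke the Sj\"ostrand--Gr\"ochenig theorem that operators with Weyl symbols in $M^{\infty,1}$ form an inverse-closed subalgebra of $\mathcal{L}(L^2(\mathbb{R}^n))$. A stronger version of this result asserts that invertibility of such an operator on any modulation space $M^p$ already forces invertibility on all of them, with inverse again having symbol in $M^{\infty,1}$. This takes care simultaneously of all Fock spaces $\mathbb{F}_t$ once the Bargmann transform is used to interchange $M^p$ with $F_t^p$, the spaces $F_t^\infty$ and $f_t^\infty$ requiring only a small additional duality or closure argument. Translating the kernel decay back to the Fock-space side via $\mathcal{B}_t$ then exhibits $A_k^{-1}$ as $A_\ell$ for some $\ell \in \mathcal{W}_t$, which combined with Proposition \ref{estimatenorm2} shows that $A_\ell \in \mathcal{W}_t(\mathbb{F}_t)$.

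The main obstacle I anticipate is the identification step: one must verify carefully that $\mathcal{W}_t$ corresponds under the Bargmann transform to all of the Sj\"ostrand class, and not merely a subalgebra of it, so that the cited Wiener-type lemma applies directly. This requires keeping track of the precise relationship between the Berezin transform on the Fock side and the short-time Fourier transform on the modulation side, but no genuinely new ideas beyond those already packaged into the references should be needed—which is consistent with the author's own remark that the contribution of the paper lies in drawing the right connections rather than in new technical machinery.
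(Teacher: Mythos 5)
Your proposal follows essentially the same route as the paper: conjugate by the Bargmann transform, identify $\mathcal W_t(\mathbb F_t)$ with the Weyl quantization of the Sj\"ostrand class $M^{\infty,1}(\mathbb R^{2n})$ via the almost-diagonalization (Gabor matrix) characterization of Gr\"ochenig, and then invoke the spectral invariance of that class on all modulation spaces $M^p$. The identification step you flag as the main obstacle is precisely what the paper delegates to Gr\"ochenig's Theorem~3.2, and the strengthened Wiener lemma you cite is the content of the Gr\"ochenig--Pfeuffer--Toft result used in the paper, so your argument is correct and matches the intended proof.
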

\begin{proof}
    We recall that the Fock spaces $F_t^p(\mathbb C^n)$ are the isometric images of the modulation spaces $M^p(\mathbb R^n)$ under the Bargmann transform $\mathcal B_t$. 
    By \cite[Theorem 3.2]{Grochenig2006}, the condition $A \in \mathcal W_t(\mathbb F_t)$ is equivalent to $\mathcal B_t^{-1} A\mathcal B_t = \mathrm{op}^w(\sigma)$ with $\sigma \in M^{\infty, 1}(\mathbb R^{2n})$, where $\mathrm{op}^w$ denotes the Weyl quantization. $\mathrm{op}^w(M^{\infty, 1}(\mathbb R^{2n}))$ is spectrally invariant in each of the spaces $\mathcal L(M^p(\mathbb R^n))$, and invertibility on one of the $M^p(\mathbb R^n)$ implies invertibility on each of the modulation spaces. This can be deduced from \cite[Corollary 4.7]{Grochenig2006}, but the precise statement can be found in \cite[Theorem 1.2]{Grochenig_Pfeuffer_Toft2022}.
\end{proof}
At this point, we will focus on applications of the result.

To emphasize that our integral operators in question act on different Banach spaces, we will denote by $\sigma(A: X \to X)$ the spectrum of the linear operator $A \in \mathcal L(X)$.
\begin{cor}
Let $k \in \mathcal W_t$. Then, $\sigma(A_k: \mathbb F_t \to \mathbb F_t)$ is independent of the particular choice of $\mathbb F_t$.
\end{cor}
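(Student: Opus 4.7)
The plan is to leverage Theorem \ref{thm:inverse_closed} together with the fact that $\mathcal{W}_t$ itself is an $\mathbb{F}_t$-independent algebra mediating between all the $\mathcal{L}(\mathbb{F}_t)$. Morally, $\lambda \in \sigma(A_k : \mathbb{F}_t \to \mathbb{F}_t)$ should be equivalent to the kernel $k - \lambda k_I$ failing to be invertible in $\mathcal{W}_t$ itself, where $k_I$ is the kernel of the identity operator.

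First I would verify that $I_{\mathbb{F}_t}$ lies in $\mathcal{W}_t(\mathbb{F}_t)$: its integral kernel is the reproducing kernel $k_I(w,z) = K_z^t(w) = e^{w \cdot \overline{z}/t}$, and
\[
|k_I(w,z)| \, e^{-\frac{|w|^2 + |z|^2}{2t}} = e^{-\frac{|w-z|^2}{2t}},
\]
so the Gaussian $H(u) = e^{-|u|^2/(2t)}$ provides an $L^1$-dominator as in \eqref{defa0}. Hence $A_k - \lambda I = A_{k - \lambda k_I}$ with $k - \lambda k_I \in \mathcal{W}_t$ for every $\lambda \in \mathbb{C}$, independently of $\mathbb{F}_t$.

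Now suppose $\lambda \notin \sigma(A_k : \mathbb{F}_{t,0} \to \mathbb{F}_{t,0})$ for one fixed Fock space $\mathbb{F}_{t,0}$. By Theorem \ref{thm:inverse_closed}, the inverse $(A_k - \lambda I)^{-1}$ lies in $\mathcal{W}_t(\mathbb{F}_{t,0})$, hence equals $A_{k_\lambda}$ for some $k_\lambda \in \mathcal{W}_t$. The two-sided inverse relations on $\mathbb{F}_{t,0}$ translate, via injectivity of the Banach algebra homomorphism of the preceding proposition, into the kernel-level identities $k_\lambda \circ (k - \lambda k_I) = k_I = (k - \lambda k_I) \circ k_\lambda$ inside $\mathcal{W}_t$. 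Applying the analogous homomorphism $\mathcal{W}_t \to \mathcal{L}(\mathbb{F}_t)$ for any other Fock space then yields $A_{k_\lambda}(A_k - \lambda I) = I = (A_k - \lambda I) A_{k_\lambda}$ on that $\mathbb{F}_t$, proving $\lambda \notin \sigma(A_k : \mathbb{F}_t \to \mathbb{F}_t)$. Since the roles of $\mathbb{F}_{t,0}$ and $\mathbb{F}_t$ are symmetric, the resolvent sets coincide.

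No real obstacle remains: the analytic content has been absorbed by Theorem \ref{thm:inverse_closed} (and ultimately the Gröchenig--Pfeuffer--Toft result for modulation spaces), and what is left is essentially bookkeeping. The one point deserving a moment of attention is that the resolvent produced by inverse-closedness on a single Fock space has the \emph{same} $\mathcal{W}_t$-kernel when pushed to any other Fock space. This is automatic because the composition in $\mathcal{W}_t$ is defined abstractly (it does not see the choice of $\mathbb{F}_t$) and is compatible with each of the Banach algebra homomorphisms $\mathcal{W}_t \to \mathcal{L}(\mathbb{F}_t)$.
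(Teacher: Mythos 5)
Your proof is correct and follows the paper's (implicit) route: the corollary is stated as an immediate consequence of Theorem \ref{thm:inverse_closed}, and your argument supplies exactly the bookkeeping the paper omits --- checking that $I = A_{k_I} \in \mathcal W_t(\mathbb F_t)$ and transferring the resolvent kernel between the various spaces via the injective Banach algebra homomorphisms $k \mapsto A_k$. One cosmetic slip: with the paper's convention $k(w,z) = \langle A K_w^t, K_z^t\rangle$ (holomorphic in $z$, anti-holomorphic in $w$), the kernel of the identity is $k_I(w,z) = K_w^t(z) = e^{z\cdot\overline{w}/t}$ rather than $K_z^t(w)$; the modulus, and hence your Gaussian dominator $H(u) = e^{-|u|^2/(2t)}$, is unchanged.
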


For $A \in \mathcal L(\mathbb F_t)$ we denote $\alpha_z(A) = W_z^t A W_{-z}^t$. Then, for $\mathbb F_t \neq F_t^\infty$ and $A \in \mathcal C_1(\mathbb F_t)$, 
\begin{align*}
\mathbb C^n \ni z \mapsto \alpha_z(A) \in \mathcal C_1(\mathbb F_t)
\end{align*}
extends to a map
\begin{align*}
\mathcal M(\operatorname{BUC}(\mathbb C^n)) \ni x \mapsto \alpha_x(A) \in \mathcal C_1(\mathbb F_t),
\end{align*}
which is continuous with respect to the strong operator topology, cf. \cite{Bauer_Isralowitz2012, Fulsche_Hagger, Fulsche2022}. Here, $\mathcal M(\operatorname{BUC}(\mathbb C^n))$ denotes the maximal ideal space of the unital $C^\ast$ algebra of bounded uniformly continuous functions on $\mathbb C^n$ (with respect to the Euclidean metric). We consider this maximal ideal space as a compactification of $\mathbb C^n$. The operators $\alpha_x(A)$ for $x \in \partial \mathbb C^n := \mathcal M(\operatorname{BUC}(\mathbb C^n)) \setminus \mathbb C$ are called \emph{limit operators of $A$}. For $A \in \mathcal C_1(F_t^\infty)$, we define the limit operators through duality, i.e.\ $\alpha_x(A) := (\alpha_x(A|_{f_t^\infty}))^{\ast \ast}$. In the following, we use the notation $\sigma_{ess}(A: X \to X)$ for the essential spectrum considered over the Banach space $X$. Recall that the essential spectrum for an operator from $\mathcal C_1(\mathbb F_t)$ can be computed through the spectra of its limit operators:
\begin{thm}[{\cite{Fulsche_Hagger, Fulsche2022}}]\label{thm:ess_spec}
Let $A \in \mathcal C_1(\mathbb F_t)$. Then, $A$ is Fredholm if and only if $\alpha_x(A)$ is invertible for every $x \in \partial \mathbb C^n$. In particular,
\begin{align*}
\sigma_{ess}(A: \mathbb F_t \to \mathbb F_t) = \bigcup_{x \in \partial \mathbb C^n} \sigma(\alpha_x(A): \mathbb F_t \to \mathbb F_t).
\end{align*}
\end{thm}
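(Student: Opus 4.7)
The plan is to reduce Theorem \ref{thm:ess_spec} to the limit operator machinery à la Rabinovich--Roch--Silbermann, adapted to the Fock space setting where $\mathcal C_1(\mathbb F_t)$ plays the role of the algebra of band-dominated operators. The three assertions (necessity of invertibility of limit operators, sufficiency, and the spectral formula) will be handled in that order, with the last being a formal consequence of the Fredholm criterion applied to $\lambda - A$.

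For the easier direction, suppose $A$ is Fredholm and fix $x \in \partial \mathbb C^n$. Pick a net $z_\gamma \to x$ in $\mathcal M(\operatorname{BUC}(\mathbb C^n))$; since each $W_{z_\gamma}^t$ is an isometry, $\alpha_{z_\gamma}(A)$ is Fredholm of the same index as $A$. If $\alpha_x(A)$ failed to be injective, any unit vector $f$ in its kernel would, via $W_{z_\gamma}^t f$, furnish an approximate eigensequence for $A$ that resists the Fredholm alternative; a symmetric argument on adjoints handles surjectivity. The key technical input is that the map $x \mapsto \alpha_x(A)$ is strong operator continuous on $\mathcal C_1(\mathbb F_t)$ (the defining shift-continuity of $\mathcal C_1(\mathbb F_t)$), together with the fact that Fredholmness is preserved under such limits inside this algebra.

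The substantive direction is the converse: if every $\alpha_x(A)$ is invertible, then $A$ is Fredholm. I would proceed by constructing a Fredholm regularizer through localization. Using compactness of $\mathcal M(\operatorname{BUC}(\mathbb C^n))$, extract a finite open cover whose members carry ``local inverses'' pulled back from $\alpha_x(A)^{-1}$ via the Weyl conjugation, with a uniform bound coming from continuous dependence on $x$. Then glue these local inverses by a partition of unity $\{\varphi_j\}$ on $\mathbb C^n$: the candidate regularizer has the form $\sum_j M_{\psi_j} B_j M_{\varphi_j}$, and the proof that $AB - I$ and $BA - I$ are compact rests on showing that commutators $[A, M_{\varphi_j}]$ are compact modulo arbitrarily small norm error. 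This commutator estimate is exactly the content of $A \in \mathcal C_1(\mathbb F_t)$ being band-dominated in the right sense, which follows from the shift continuity together with standard kernel-decay arguments.

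The spectral formula is then immediate: $\lambda \notin \sigma_{ess}(A)$ iff $\lambda - A$ is Fredholm iff $\alpha_x(\lambda - A) = \lambda - \alpha_x(A)$ is invertible for every $x \in \partial \mathbb C^n$ iff $\lambda \notin \bigcup_x \sigma(\alpha_x(A))$. The main obstacle I expect is the gluing step in the sufficiency direction: making the partition-of-unity localization rigorous requires both the compactness of commutators with $\operatorname{BUC}$ multipliers and a uniform control of local inverses across $\mathcal M(\operatorname{BUC}(\mathbb C^n))$. In the Fock space setting this is delicate because multiplication operators do not leave $\mathbb F_t$ invariant, so the localization must be effected through Toeplitz operators $T_{\varphi_j}^t$, and one has to verify that the resulting localized algebra still behaves like the band-dominated one. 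These are precisely the technical points carried out in \cite{Fulsche_Hagger, Fulsche2022}, which I would invoke rather than reprove.
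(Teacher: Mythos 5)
The paper does not prove this theorem at all: it is quoted verbatim from \cite{Fulsche_Hagger, Fulsche2022}, so the ``paper's own proof'' is a citation, and your proposal --- which also ultimately invokes those references for the hard steps --- is at the same level of rigour as the paper's treatment. The necessity direction you sketch (Fredholmness of $A$ forces invertibility of every $\alpha_x(A)$ via weakly null approximate eigensequences $W_{-z_\gamma}^tf$ and the dual argument) is standard and sound, modulo the usual care on the non-reflexive endpoints.

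Where your sketch genuinely diverges from, and is weaker than, the cited proofs is the sufficiency direction. The phrase ``a uniform bound coming from continuous dependence on $x$'' does not hold up: the map $x \mapsto \alpha_x(A)$ is continuous only in the strong operator topology, so pointwise invertibility of the $\alpha_x(A)$ gives no uniform bound on $\sup_x \|\alpha_x(A)^{-1}\|$ for free. This uniform invertibility problem is precisely the notorious core difficulty of limit operator theory (resolved only by Lindner--Seidel in the classical $\mathbb Z^N$ setting), and a partition-of-unity gluing of local inverses cannot get off the ground without it. The proofs in \cite{Fulsche_Hagger, Fulsche2022} avoid this route entirely: they work with the quantized harmonic analysis / correspondence theory of $\mathcal C_1(\mathbb F_t)$, showing that $A$ is Fredholm iff $A + \mathcal K$ is invertible in $\mathcal C_1(\mathbb F_t)/\mathcal K(\mathbb F_t)$ and then controlling the quotient norm by the family of limit operators via Berezin-transform estimates, rather than by localizing with Toeplitz multipliers $T_{\varphi_j}^t$. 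So your plan for the converse, taken literally, would stall at the gluing step for a reason deeper than the ``multiplication operators do not preserve $\mathbb F_t$'' issue you flag; since you defer to the references anyway, the statement stands, but you should be aware that the argument they supply is not the Rabinovich--Roch--Silbermann localization you describe.
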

We have the following important fact concerning the Wiener algebra and limit operators:
\begin{lem}\label{lemma:WienerLimit}
Let $A \in \mathcal W_t(\mathbb F_t)$. Then, $\alpha_x(A) \in \mathcal W_t(\mathbb F_t)$ for every $x \in \partial \mathbb C^n$.
\end{lem}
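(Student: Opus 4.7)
The plan is to prove this in two stages: first show that $\alpha_z(A) \in \mathcal{W}_t(\mathbb F_t)$ for finite translations $z \in \mathbb C^n$ via an explicit kernel computation, and then pass to $x \in \partial \mathbb C^n$ using the SOT-continuity of $z \mapsto \alpha_z(A)$ together with the injectivity of the bivariate Berezin transform.

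\textbf{Step 1 (Finite translations).} Write $A = A_k$. Using $W_z^t g(w) = k_z^t(w) g(w-z)$ and performing the change of variables $u \mapsto u - z$ in the integral $A_k(W_{-z}^t f)$, one checks that $\alpha_z(A_k) = W_z^t A_k W_{-z}^t$ is the integral operator with kernel
\begin{align*}
k_z'(u,w) = e^{\frac{w \cdot \overline z + \overline u \cdot z}{t} - \frac{|z|^2}{t}}\, k(u-z, w-z),
\end{align*}
which is anti-holomorphic in $u$ and holomorphic in $w$. Expanding $|u-z|^2$ and $|w-z|^2$ inside the defining bound for $k$, all terms involving $z$ cancel exactly against the real part of the phase factor, leaving
\begin{align*}
|k_z'(u,w)| \leq e^{\frac{|u|^2+|w|^2}{2t}}\, H(u-w).
\end{align*}
Since $H((u-z)-(w-z)) = H(u-w)$, the same dominator $H$ works, giving $k_z' \in \mathcal W_t$ with $\| \alpha_z(A)\|_{\mathcal W_t} \leq \|A\|_{\mathcal W_t}$ uniformly in $z$.

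\textbf{Step 2 (Passing to the boundary).} Fix $x \in \partial \mathbb C^n$ and a net $(z_\beta)$ in $\mathbb C^n$ with $z_\beta \to x$. For $\mathbb F_t \neq F_t^\infty$, SOT-continuity of $\alpha_\cdot(A)$ on $\mathcal C_1(\mathbb F_t)$ yields $\alpha_{z_\beta}(A) K_u^t \to \alpha_x(A) K_u^t$ in $\mathbb F_t$. Since point evaluation is bounded on $\mathbb F_t$, this upgrades to pointwise convergence
\begin{align*}
k_{z_\beta}'(u,w) = (\alpha_{z_\beta}(A) K_u^t)(w) \longrightarrow (\alpha_x(A) K_u^t)(w) =: k_x'(u,w).
\end{align*}
Passing to the limit in the uniform estimate from Step~1 gives $|k_x'(u,w)| \leq e^{\frac{|u|^2+|w|^2}{2t}}\, H(u-w)$. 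Joint analyticity of $k_x'$ follows from the representation $k_x'(u,w) = (\alpha_x(A) K_u^t)(w)$, since $u \mapsto K_u^t$ is anti-holomorphic into $\mathbb F_t$, $\alpha_x(A)$ is bounded, and point evaluation at $w$ is continuous and holomorphic. Hence $k_x' \in \mathcal W_t$.

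\textbf{Step 3 (Identification).} By Proposition \ref{estimatenorm2}, $A_{k_x'}$ is bounded on $\mathbb F_t$, and by Proposition \ref{prop:comp_Berezin} its bivariate Berezin transform coincides with $\widetilde{\alpha_x(A)}$ by construction of $k_x'$. Injectivity of the Berezin transform then forces $A_{k_x'} = \alpha_x(A)$, proving $\alpha_x(A) \in \mathcal W_t(\mathbb F_t)$. For $\mathbb F_t = F_t^\infty$, apply the argument on $f_t^\infty$ and use the convention $\alpha_x(A) = (\alpha_x(A|_{f_t^\infty}))^{\ast\ast}$; the resulting operator equals $A_{k_x'}$ on $F_t^\infty$ because $A_{k_x'}$ is already defined simultaneously on all $\mathbb F_t$.

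\textbf{Main obstacle.} The only genuine computation lies in Step~1: verifying that the phase produced by the Weyl operators cancels the shift of the Gaussian in the $H$-estimate precisely, leaving no $z$-dependence. This cancellation is essentially forced by the Weyl relation in Lemma~\ref{lem:prop_weyl}(2). Once it is in place, the remainder is a routine limiting argument in which the Wiener-algebra bound, which is translation-invariant, is preserved under the pointwise convergence of the bivariate Berezin transforms.
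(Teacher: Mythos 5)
Your proposal is correct and follows essentially the same route as the paper: an explicit computation showing that the Weyl phases cancel so that $\alpha_z(A_k)$ has kernel dominated by the same $H$ (the paper's identity $\langle \alpha_v(A_k) K_w^t, K_z^t\rangle = e^{\frac{|z|^2+|w|^2}{2t}} e^{i \im (v\cdot \overline{(w-z)})/t} \langle A k_{w-v}^t, k_{z-v}^t\rangle$), followed by SOT-convergence of $\alpha_{z_\beta}(A)$ along a net tending to $x$, pointwise convergence of the bivariate Berezin transforms, and passage to the limit in the translation-invariant bound. Your Step 3 merely spells out the identification of the limit operator with $A_{k_x'}$, which the paper leaves implicit.
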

\begin{proof}
Let $k \in \mathcal W_t$ such that $A = A_k$ and $H \in L^1(\mathbb C^n)$ as in the definition. Direct computations, using the identity $W_z^t W_w^t = e^{-\frac{\operatorname{Im}(z \cdot \overline w)}{t}}W_{z+w}^t$, easily show that
\begin{align*}
\langle \alpha_v(A_k) K_w^t, K_z^t\rangle = e^{\frac{|z|^2+|w|^2}{2t}} e^{i \frac{\im (v\cdot \overline{(w-z)})}{t}} \langle A k_{w-v}^t, k_{z-v}^t\rangle.
\end{align*}
Since $A_k \in \mathcal C_1(F_t^2)$, $\alpha_{v_\gamma}(A_k) \to \alpha_x(A_k)$ in strong operator topology whenever $v_\gamma \to x \in \partial \mathbb C^n$. In particular,
\begin{align*}
\langle \alpha_{v_\gamma}(A_k) k_w^t, k_z^t\rangle \to \langle \alpha_x(A_k) k_w^t, k_z^t\rangle
\end{align*}
for every $w, z\in \mathbb C^n$. Hence, 
\begin{align*}
|\langle \alpha_x(A_k) k_w^t, k_z^t\rangle| &= \lim_{\gamma} |\langle \alpha_{v_\gamma}(A_k) k_w^t, k_z^t\rangle| \\
&= \lim_{\gamma} | \langle A_k k_{w-v_\gamma}^t, k_{z-v_\gamma}^t\rangle|\\
&\leq \limsup_{\gamma} H((w-v_\gamma) - (z-v_\gamma))\\
&= H(w-z).
\end{align*}
This shows that $k_x(w,z) := \langle \alpha_x(A_k) K_w^t, K_z^t\rangle \in \mathcal W_t$, hence $\alpha_x(A_k) \in \mathcal W_t(\mathbb F_t)$.
\end{proof}
Combining the previous lemma with Theorems \ref{thm:ess_spec} and \ref{thm:inverse_closed} yields:
\begin{cor}\label{cor:FredholmnessIndependent}
Let $k \in \mathcal W_t$. Then, $\sigma_{ess}(A_k: \mathbb F_t \to \mathbb F_t)$ is independent of the particular choice of $\mathbb F_t$.
\end{cor}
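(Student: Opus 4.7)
The plan is to combine the three ingredients named in the hint in a straightforward way, so the proof will be short; the only care needed is to verify that each limit operator lies again in $\mathcal W_t(\mathbb F_t)$ before applying the inverse-closedness result.

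First I would invoke Theorem \ref{thm:ess_spec}, which, since $A_k \in \mathcal W_t(\mathbb F_t) \subset \mathcal C_1(\mathbb F_t)$, gives
\begin{align*}
\sigma_{ess}(A_k : \mathbb F_t \to \mathbb F_t) = \bigcup_{x \in \partial \mathbb C^n} \sigma(\alpha_x(A_k) : \mathbb F_t \to \mathbb F_t).
\end{align*}
So the problem reduces to showing that each individual spectrum $\sigma(\alpha_x(A_k) : \mathbb F_t \to \mathbb F_t)$ is independent of the choice of $\mathbb F_t$.

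Next I would apply Lemma \ref{lemma:WienerLimit}: for every $x \in \partial \mathbb C^n$, the limit operator $\alpha_x(A_k)$ belongs again to $\mathcal W_t(\mathbb F_t)$, i.e.\ it is itself the integral operator associated with some kernel in $\mathcal W_t$. Then Theorem \ref{thm:inverse_closed} together with the preceding corollary (or the standard fact that inverse-closedness of a Banach subalgebra transfers spectral invariance) shows that $\sigma(\alpha_x(A_k) : \mathbb F_t \to \mathbb F_t)$ does not depend on which $\mathbb F_t$ we realize the operator on.

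Combining the two displays, the right-hand side of the first identity is a union of sets each of which is independent of $\mathbb F_t$, hence so is $\sigma_{ess}(A_k : \mathbb F_t \to \mathbb F_t)$. The only subtle point is that we must apply Lemma \ref{lemma:WienerLimit} before the inverse-closedness step, since spectral invariance is a statement about elements of $\mathcal W_t(\mathbb F_t)$ and would not a priori apply to a general limit operator in $\mathcal C_1(\mathbb F_t)$; but the lemma furnishes exactly this, so there is no real obstacle here.
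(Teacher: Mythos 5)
Your argument is correct and is exactly the paper's proof: the paper derives this corollary by combining Lemma \ref{lemma:WienerLimit}, Theorem \ref{thm:ess_spec} and Theorem \ref{thm:inverse_closed} (via the spectral-independence corollary) in precisely the order you describe. Your remark about needing Lemma \ref{lemma:WienerLimit} before invoking inverse-closedness correctly identifies the one point that makes the combination work.
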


Having obtained the previous result, one will of course wonder if the Fredholm index depends on the choice of $\mathbb F_t$. Indeed, this is not the case:
\begin{thm}\label{thm:IndexConst}
    Let $k \in \mathcal W_t$ such that $A_k$ is Fredholm. Then, $\operatorname{ind}(A_k)$ does not depend on the choice of $\mathbb F_t$.
\end{thm}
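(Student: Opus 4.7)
My plan is to reduce the index invariance to an independence statement for kernel dimensions. Under the $F_t^2$-pairing the Banach-space adjoint of $A_k$ coincides with the integral operator $A_{k^*}$ associated to the involution $k^*(w,z) = \overline{k(z,w)}$, which is still an element of $\mathcal W_t$ (the defining Gaussian estimate is symmetric in $w$ and $z$). Hence for $\mathbb F_t \in \{F_t^p : 1 \leq p < \infty\} \cup \{f_t^\infty\}$ we have
\[
\mathrm{ind}(A_k : \mathbb F_t) = \dim \ker(A_k : \mathbb F_t) - \dim \ker(A_{k^*} : \mathbb F_t'),
\]
so it suffices to prove that $\dim \ker(A_k : \mathbb F_t)$ is independent of $\mathbb F_t$ for every Fredholm $A_k \in \mathcal W_t$; applying this to $A_{k^*}$ then covers the cokernel and hence the index. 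The remaining endpoint $\mathbb F_t = F_t^\infty$ is handled by the identification $F_t^\infty = (f_t^\infty)''$ together with the standard fact that a Fredholm operator has the same index as its bidual on the bidual space.

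The nested continuous inclusions $F_t^1 \subset F_t^p \subset f_t^\infty \subset F_t^\infty$, obtained from the pointwise growth estimate in a Fock space, give $\ker(A_k : F_t^1) \subseteq \ker(A_k : \mathbb F_t)$; the reverse inclusion amounts to the regularity statement $\ker(A_k : F_t^\infty) \subseteq F_t^1$. My strategy for this is a left-parametrix argument inside $\mathcal W_t$. A completing-the-square computation shows that the rank-one operators $f \mapsto \langle f, K_w^t\rangle K_z^t$ have integral kernel $e^{u \cdot \bar z/t + w \cdot \bar v/t}$ satisfying the defining estimate of $\mathcal W_t$, and their range lies in $\Span\{K_z^t\} \subset F_t^1$. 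Now pick such a finite-rank $P \in \mathcal W_t$ so that $A_k + P$ is injective with closed range on $F_t^2$. Then $(A_k + P)^*(A_k + P) \in \mathcal W_t$ is a positive invertible operator on $F_t^2$, so by Theorem \ref{thm:inverse_closed} its inverse lies in $\mathcal W_t$. Setting $B := ((A_k + P)^*(A_k + P))^{-1} (A_k + P)^* \in \mathcal W_t$, one has $B(A_k + P) = I$, hence $BA_k = I - R$ with $R := BP \in \mathcal W_t$ of finite rank and range in $B(F_t^1) \subseteq F_t^1$. Any $f \in \ker(A_k : \mathbb F_t)$ then satisfies $f = Rf \in F_t^1$, as required.

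The main obstacle is producing the finite-rank $P \in \mathcal W_t$ (with range in $\Span\{K_z^t\}$) that makes $A_k + P$ injective on $F_t^2$. Since compact perturbations preserve the Fredholm index, and an injective Fredholm operator has non-positive index, this construction only works when $\mathrm{ind}(A_k) \leq 0$. The complementary case $\mathrm{ind}(A_k) > 0$ is handled by running the same argument on $A_{k^*}$ (whose index is $-\mathrm{ind}(A_k) \leq 0$); this yields kernel-dimension independence for $A_{k^*}$, and hence cokernel-dimension independence for $A_k$ via the duality above. Combining the two cases with the bidual argument for $F_t^\infty$ completes the proof. A smoother but less self-contained alternative is to conjugate by the Bargmann transform as in the proof of Theorem \ref{thm:inverse_closed} and invoke an index-invariance result for the Sjöstrand class $\mathrm{op}^w(M^{\infty,1})$ acting on the modulation space scale.
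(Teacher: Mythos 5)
Your overall strategy (reduce to $\mathbb F_t$-independence of $\dim\ker$ via duality, then prove a regularity statement $\ker(A_k\colon F_t^\infty)\subseteq F_t^1$ by a finite-rank-remainder left parametrix inside $\mathcal W_t$) is genuinely different from the paper, which instead feeds Corollary \ref{cor:FredholmnessIndependent} into Albrecht's theorem on the continuity of the Fredholm index along the complex interpolation scale $[F_t^1,f_t^\infty]_{[\theta]}=F_t^p$. Most of your machinery is sound: the rank-one operators $\langle\cdot,K_w^t\rangle K_z^t$ do lie in $\mathcal W_t$, the identity lies in $\mathcal W_t$ (take $H(u)=e^{-|u|^2/2t}$, so inverse-closedness is meaningfully applicable), and the identity $B(A_k+P)=I$ transfers to every $\mathbb F_t$ through the kernel calculus.

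However, there is a genuine gap in the case $\operatorname{ind}(A_k\colon F_t^2)\neq 0$. Your reduction requires $\dim\ker$-independence for \emph{every} Fredholm element of $\mathcal W_t$ (so that it can be applied to both $A_k$ and $A_{k^*}$), but your parametrix construction only delivers it when the index is $\leq 0$. The two cases in your last paragraph are mutually exclusive rather than complementary: if $\operatorname{ind}(A_k)<0$ you obtain constancy of $\dim\ker(A_k\colon\mathbb F_t)$ but nothing about $\dim\operatorname{coker}(A_k\colon\mathbb F_t)=\dim\ker(A_{k^*}\colon\mathbb F_t')$, since $A_{k^*}$ has positive index and admits no injective finite-rank perturbation; symmetrically for $\operatorname{ind}(A_k)>0$ the kernel of $A_k$ is left uncontrolled. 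The trivial monotonicity coming from the inclusions $F_t^1\subset F_t^p\subset f_t^\infty$ only shows that the index is monotone in $p$, not constant, so "combining the two cases" does not close the argument except when the index vanishes. The gap is fixable within your framework: instead of perturbing $A_k$ to an injective operator, perturb the index-zero operator $A_k^*A_k\in\mathcal W_t$ by a finite-rank $P$ with range in $\Span\{K_z^t\}$ so that $A_k^*A_k+P$ is invertible on $F_t^2$; then $C:=(A_k^*A_k+P)^{-1}\in\mathcal W_t$ by Theorem \ref{thm:inverse_closed} and $(CA_k^*)A_k=I-CP$ gives a left parametrix with finite-rank remainder mapping into $F_t^1$ for \emph{arbitrary} Fredholm $A_k\in\mathcal W_t$, after which your duality reduction does complete the proof.
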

Proving this result hinges on the theory of complex interpolation. We have the following general result on the complex interpolation method:
\begin{thm}[{\cite[Theorem 4.3]{Albrecht1984}}]
    Let $\overline{X}$ be a compatible couple of Banach spaces and $A \in \mathcal L(\overline{X})$. Then, 
    \begin{align*}
        F_A := \{ \theta \in [0, 1]: ~A: \overline{X}_{[\theta]} \to \overline{X}_{[\theta]} \text{ is Fredholm}\}
    \end{align*}
    is an open subset of $[0, 1]$ and $\operatorname{ind}(A: \overline{X}_{[\theta]} \to \overline{X}_{[\theta]})$ is a continuous function of $\theta \in F_A$.
\end{thm}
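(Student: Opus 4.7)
The plan is to invoke Albrecht's theorem (quoted immediately after) for the compatible couple $\overline{X} = (F_t^1, f_t^\infty)$, which by the preliminaries satisfies $\overline{X}_{[\theta]} = F_t^{p}$ with $p = 1/(1-\theta)$ for every $\theta \in [0,1]$, with the understanding that $\overline{X}_{[0]} = F_t^1$ and $\overline{X}_{[1]} = f_t^\infty$. Proposition \ref{estimatenorm2} gives $A_k \in \mathcal L(\overline{X})$, since the norm estimate $\|A_k\|_{\mathbb F_t \to \mathbb F_t} \leq \|k\|_{\mathcal W_t}$ holds simultaneously on $F_t^1$ and on $f_t^\infty$ (and $A_k$ is already defined globally on the ambient space $\Sigma(\overline{X}) = f_t^\infty$). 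This interpolation couple reaches every Fock space in the list except $F_t^\infty$, which will require a separate duality argument at the end.

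Next, Corollary \ref{cor:FredholmnessIndependent} ensures that Fredholmness is independent of $\mathbb F_t$, so the hypothesis that $A_k$ is Fredholm on one Fock space implies it is Fredholm on each $\overline{X}_{[\theta]}$. In the notation of Albrecht's theorem, $F_{A_k} = [0, 1]$. The theorem then asserts that the map
\begin{align*}
\theta \mapsto \operatorname{ind}(A_k : \overline{X}_{[\theta]} \to \overline{X}_{[\theta]})
\end{align*}
is continuous on $[0,1]$. Since the Fredholm index is integer-valued and $[0,1]$ is connected, this map is constant; in particular, $\operatorname{ind}(A_k)$ takes a single value on $F_t^1$, on each $F_t^p$ with $1 < p < \infty$, and on $f_t^\infty$.

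To reach $F_t^\infty$ I use duality. Under the $F_t^2$-pairing, $(f_t^\infty)' \cong F_t^1$ and $(F_t^1)' \cong F_t^\infty$, and a direct computation with the integral kernel shows that the Banach-space transpose of $A_k$ (on $f_t^\infty$ or on $F_t^1$) is $A_{k^\ast}$ on the dual space; taking transposes twice, $A_k|_{F_t^\infty}$ is precisely the biadjoint of $A_k|_{f_t^\infty}$ (this is the viewpoint implicitly used in the paper's definition of limit operators on $F_t^\infty$). Since $\operatorname{ind}(B^\ast) = -\operatorname{ind}(B)$ for any Fredholm operator on a Banach space, applying this identity twice yields $\operatorname{ind}(B^{\ast\ast}) = \operatorname{ind}(B)$, and hence $\operatorname{ind}(A_k : F_t^\infty) = \operatorname{ind}(A_k : f_t^\infty)$, completing the proof.

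The main conceptual obstacle sits at this $F_t^\infty$ endpoint: because $F_t^1$ (and more generally $\Delta(\overline{X})$) is not dense in $F_t^\infty$, one cannot incorporate $F_t^\infty$ as an endpoint of any interpolation couple in a form compatible with Albrecht's theorem, so the biadjoint detour is unavoidable. Everything else is bookkeeping: verifying $A_k \in \mathcal L(\overline{X})$ from Proposition \ref{estimatenorm2}, identifying $F_{A_k} = [0,1]$ from Corollary \ref{cor:FredholmnessIndependent}, and using integer-valuedness to pass from continuity of the index to constancy.
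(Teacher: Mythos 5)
Your proposal does not actually prove the quoted statement. The statement here is Albrecht's interpolation theorem itself --- openness of $F_A$ and continuity of $\theta \mapsto \operatorname{ind}(A\colon \overline{X}_{[\theta]} \to \overline{X}_{[\theta]})$ for an \emph{arbitrary} compatible couple $\overline{X}$ and arbitrary $A \in \mathcal L(\overline{X})$ --- and your argument invokes exactly this theorem as a black box, so as a proof of the displayed statement it is circular. To be fair, the paper does not prove this statement either: it is imported wholesale from \cite{Albrecht1984}, with only the remark that the endpoints $\theta = 0, 1$ require a straightforward modification of \cite[Lemma 4.2]{Albrecht1984}. A genuine proof would have to work with the analytic families of operators underlying the complex method (the content of Albrecht's Lemma 4.2 and Theorem 4.3), none of which appears in your write-up.

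What you have in fact written is a proof of Theorem \ref{thm:IndexConst}, the $p$-independence of $\operatorname{ind}(A_k)$, and judged as such it coincides with the paper's own proof of that theorem in every essential respect: the couple $(F_t^1, f_t^\infty)$ with $\overline{X}_{[\theta]} = F_t^{1/(1-\theta)}$, membership $A_k \in \mathcal L(\overline{X})$ via Proposition \ref{estimatenorm2}, the identification $F_{A_k} = [0,1]$ via Corollary \ref{cor:FredholmnessIndependent}, constancy of a continuous integer-valued function on the connected set $[0,1]$, and the biadjoint identity $\operatorname{ind}(B^{\ast\ast}) = \operatorname{ind}(B)$ to transfer the result from $f_t^\infty$ to $F_t^\infty$. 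Your added observations --- that $\Delta(\overline{X}) = F_t^1$ is not dense in $F_t^\infty$, which is why $F_t^\infty$ cannot be reached as an interpolation endpoint and the duality detour is unavoidable, and the explicit identification of the Banach-space transpose of $A_k$ with $A_{k^\ast}$ --- are correct and slightly more detailed than the paper's one-line treatment of this endpoint. So: if the intended target was Theorem \ref{thm:IndexConst}, your proof is correct and matches the paper; if the intended target was the Albrecht theorem as stated, you have supplied no proof at all.
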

We want to mention that the endpoints $\theta = 0, 1$ are not included in the formulation of \cite[Theorem 4.3]{Albrecht1984}, but there is no problem in extending the result to the endpoints by a straightforward modification of \cite[Lemma 4.2]{Albrecht1984}.
\begin{proof}[Proof of Theorem \ref{thm:IndexConst}]
    Let $k \in \mathcal W_t$. With the compatible couple $\overline{X} = (F_t^1, f_t^\infty)$ we clearly have $A_k \in \mathcal L(\overline{X})$. If we assume that $A_k$ is Fredholm, then, with the notation of the previous theorem, $F_{A_k} = [0, 1]$ by Corollary \ref{cor:FredholmnessIndependent}. By the continuity of the index in the previous theorem, we obtain that the Fredholm index is the same on every $F_t^p$, $1 \leq p < \infty$, and on $f_t^\infty$. Finally, the Fredholm index on $F_t^\infty$ equals that on $f_t^\infty$, as $A_k: F_t^\infty \to F_t^\infty$ is the double adjoint of $A_k: f_t^\infty \to f_t^\infty$.
\end{proof}

\section{Discussion}
We have seen that for $k \in \mathcal W_t$, none of $\sigma(A_k)$, $\sigma_{ess}(A_k)$ and $\operatorname{ind}(A_k)$ depend on the choice of the space $\mathbb F_t$ on which the operator is realized. Of course, there is much more spectral data of interest, and one could wonder about $\mathbb F_t$ (in)dependence of any kind of spectral quantity. Just to give one example, one could ask if the multiplicity of eigenvalues is $\mathbb F_t$-independent. Maybe the most pressing question in this respect is the following:
\begin{quest}
    Let $k \in \mathcal W_t$. Is the property $A_k \in \mathcal N(\mathbb F_t)$, the class of nuclear operators, independent of $\mathbb F_t$? Is $\tr(A_k)$ independent of $\mathbb F_t$? Is it always $\tr(A_k) = \int_{\mathbb C^n} k(z,z)d\mu_t(z)$?
\end{quest}
We also want to mention that we are certain that the results described in this work extend to the case of vector-valued Fock spaces $\mathbb F_t^N$. As is well-known, this is crucial for obtaining a satisfactory index theory whenever $n > 1$. The algebra $\mathcal W_t^{N \times N}$ is then easily defined by a coordinate-wise condition, and basic properties of $\mathcal W_t^{N \times N}$ are easily described as for $N = 1$. Among the three important results that led to our spectral invariance properties, the interpolation results from \cite{Albrecht1984} apply in the same way to the vector-valued case. The limit operator techniques from \cite{Fulsche_Hagger, Fulsche2022} can also be extended to the vector-valued case, involving only straightforward adaptations (see also the general setting in \cite{Hagger_Seifert2020}, which covers the vector-valued case in the reflexive range). We are also certain that the spectral invariance properties of the algebra $\operatorname{op}^w(M^{\infty, 1}(\mathbb R^{2n}))$ from \cite{Grochenig2006, Grochenig_Pfeuffer_Toft2022}, that we made crucial use of, extends to the vector-valued case. Nevertheless, it seems inappropriate to state this as a fact without adding some details on it, so we defer from formulating the spectral invariances in the vector-valued case as a theorem.

Of course, one can ask analogous questions of spectral invariance about operators on Bergman spaces. The most natural setting seems to be that of the complex ball in $\mathbb C^n$ or, more generally, of bounded symmetric domains. For simplicity, we shall restrict our discussion to the disk $\mathbb D = \{ z \in \mathbb C: ~|z| < 1\}$. The Bergman space $A_\alpha^p(\mathbb D)$ is usually defined as
\begin{align*}
    A_\alpha^p(\mathbb D) := \operatorname{Hol}(\mathbb D) \cap L^p(\mathbb D, \nu_\alpha),
\end{align*}
where $\alpha > -1$ and $\nu_\alpha$ is the following probability measure on $\mathbb D$:
\begin{align*}
    d\nu_\alpha(z) = \frac{\Gamma(\alpha + 2)}{\Gamma(\alpha + 1)}(1-|z|^2)^\alpha ~dz
\end{align*}
For $p = 2$, the reproducing kernel is now given by $K_z^\alpha(w) = (1-w \overline{z})^{-(\alpha + 2)}$. The unfavourable outcome of these conventions is the fact that $\| K_z^\alpha\|_{A_\alpha^p} = (1-|z|^2)^{-\frac{2+\alpha}{q}}$, where $\frac{1}{p} + \frac{1}{q}= 1$. This means that, for $A \in \mathcal L(A_\alpha^p)$ and the normalized reproducing kernels $k_z^{\alpha, p} = \frac{K_z^\alpha}{\| K_z^\alpha\|_{A_\alpha^p}}$:
\begin{align*}
    \langle A k_z^{\alpha, p}, k_w^{\alpha, p}\rangle = \langle A K_z^\alpha, K_w^\alpha\rangle (1-|z|^2)^{\frac{\alpha+2}{q}} (1-|w|^2)^{\frac{\alpha + 2}{p}}.
\end{align*}
Hence, with these standard conventions there is a natural $p$-dependence in the integral kernels. While this might be overcome with appropriate techniques, the simpler approach would probably be considering the spaces
\begin{align*}
    \mathcal A_\alpha^p(\mathbb D) := \operatorname{Hol}(\mathbb D) \cap L^p(\mathbb D, \nu_{\frac{p(\alpha + 2)}{2}-2}).
\end{align*}
Note that this is exactly in analogy with the scaling of the measure that is done on the Fock space. Upon doing this, one enforces that $\| K_z^\alpha\|_{\mathcal A_\alpha^p} = (1-|z|^2)^{-\frac{2+\alpha}{2}}$, hence the $p$-dependence of the bivariate Berezin transform disappears. One could now consider the class $\mathcal W_\alpha$ of all kernels $k = k(z, w)$, holomorphic in $z$ and anti-holomorphic in $w$, such that
\begin{align*}
    |k(z,w)| \leq (1-|z|^2)^{-\frac{2+\alpha}{2}}(1-|w|^2)^{-\frac{2+\alpha}{2}} H(z-w),
\end{align*}
where $H \in L^1(\mathbb D, \mu)$. Here, $\mu$ is the M\"{o}bius-invariant measure on $\mathbb D$, $d\mu(z) = (1-|z|^2)^{-2}dz$. It seems not unreasonable to expect that operators $A \in \mathcal L(\mathcal A_\alpha^p)$ with integral kernels in this class have similar invariance properties to the operators described in this paper. We want to note that results on essential spectra through limit operators are available in \cite{Hagger2017, Hagger2019, Hagger_Seifert2020} and the interpolation result from \cite{Albrecht1984} can of course be used in the same way. It is the spectral invariance properties from \cite{Grochenig_Pfeuffer_Toft2022} that needs a substitute in the case of the Bergman space.
\begin{quest}
    Is there a Wiener algebra with similar properties on the Bergman spaces $\mathcal A_\alpha^p$?
\end{quest}

\bibliographystyle{amsplain}
\bibliography{References}

\bigskip

\noindent
Robert Fulsche\\
\href{fulsche@math.uni-hannover.de}{\Letter fulsche@math.uni-hannover.de}
\\

\noindent
Institut f\"{u}r Analysis\\
Leibniz Universit\"at Hannover\\
Welfengarten 1\\
30167 Hannover\\
GERMANY

\end{document}